\pgfplotsset{compat=1.18}
\pgfplotsset{ignore zero/.style={%
  #1ticklabel={\ifdim\tick pt=0pt \else\pgfmathprintnumber{\tick}\fi}
}}
\def\BibTeX{{\rm B\kern-.05em{\sc i\kern-.025em b}\kern-.08em
    T\kern-.1667em\lower.7ex\hbox{E}\kern-.125emX}}
\newcommand{\R}{\ensuremath{\mathds{R}}} 
\newcommand{\N}{\ensuremath{\mathds{N}}} 
\newcommand{\T}{\ensuremath{\top}}                          
\newcommand{\Ic}{\ensuremath\mathds{I}_{\Ccal_\epsilon}}
\newcommand{\Nc}{\ensuremath\mathds{N}_{\Ccal_\epsilon}}
\DeclareMathOperator{\diag}{diag}       
\DeclareMathOperator{\zer}{Zer}     
\DeclareMathOperator{\dom}{dom}     
\DeclareMathOperator{\image}{im}        
\DeclareMathOperator{\kernel}{ker}      
\DeclareMathOperator{\Fix}{Fix}         
\newcommand{\blue}[1]{{\color{black}#1}}  
\newcommand{\norm}[1]{\ensuremath\left\lVert{#1}\right\rVert}
\newcommand{\Ccal}{\ensuremath{\mathcal{C}}}
\newcommand{\Ecal}{\ensuremath{\mathcal{E}}}
\newcommand{\Gcal}{\ensuremath{\mathcal{G}}}
\newcommand{\Vcal}{\ensuremath{\mathcal{V}}}
\newcommand{\pI}{\ensuremath{p_I}}
\newcommand{\pO}{\ensuremath{p_O}}
\newcommand{\DI}{\ensuremath{D_I}}
\newcommand{\DO}{\ensuremath{D_O}}
\newcommand{\DOGDO}{\ensuremath{\DO G\DO^\T}}
\newtheorem{theorem}{Theorem}
\newtheorem{lemma}[theorem]{Lemma}
\newtheorem{remark}{Remark}
\newtheorem{obj}{Objective}
\newtheorem{problem}{Problem}
\newtheorem{assumption}{Assumption}
\pgfplotsset{compat=1.5}
\tikzset{%
        block/.style = {draw, rectangle,
            minimum height=1cm,
            minimum width=2cm},
        input/.style = {coordinate,node distance=1.5cm},
        output/.style = {coordinate,node distance=1cm},
        arrow/.style={draw, -latex,node distance=1cm},
        pinstyle/.style = {pin edge={latex-, black,node distance=2cm}},
	system/.style = {rectangle, draw=black, line width=0.5pt, fill=rugblue!30,
		inner sep=0pt, minimum width=14mm, minimum height=8mm,
		text height=1.8ex,text depth=.25ex},
	sum/.style = {circle, draw=black, line width=0.5pt, fill=white, inner sep=0pt, minimum size=0.4cm},
	signal/.style = {color=black, line width=0.7pt, >=latex},
}
\title{Convergence of energy-based learning in linear resistive networks}
\author{Anne-Men Huijzer, Thomas Chaffey, Bart Besselink and Henk J. van Waarde
\thanks{M. A. Huijzer, B. Besselink, and H.J. van Waarde are with the Bernoulli Institute for Mathematics, Computer Science, and Artificial Intelligence, University of Groningen, Groningen, The Netherlands; email: {\tt{m.a.huijzer@rug.nl}}; {\tt{b.besselink@rug.nl}}; {\tt{h.j.van.waarde@rug.nl}}. M. A. Huijzer and B. Besselink are also with CogniGron - Groningen Cognitive Systems and Materials Center.}
\thanks{T. Chaffey was with the Control Group, Department of Engineering, University of Cambridge, UK, and is now with the School of Electrical and Computer Engineering, University of Sydney, Australia; email: \tt{thomas.chaffey@sydney.edu.au}.}}
\begin{document}
	\maketitle

    \begin{abstract}
        Energy-based learning algorithms are alternatives to backpropagation and are well-suited to distributed implementations in analog electronic devices.  However, a rigorous theory of convergence is lacking.  We make a first step in this direction by analysing a particular energy-based learning algorithm, Contrastive Learning, applied to a network of linear adjustable resistors. It is shown that, in this setup, Contrastive Learning is equivalent to projected gradient descent on a convex function \blue{with Lipschitz continuous gradient, giving a guarantee of convergence of the algorithm for a range of stepsizes.  This convergence result is then extended to a stochastic variant of Contrastive Learning.}
    \end{abstract}

    \begin{IEEEkeywords}
        Optimization algorithms; Energy-based learning; Network analysis and control
    \end{IEEEkeywords}

\section{Introduction}\label{sec:intro}

Backpropagation is the most popular method of training artificial neural networks.  However, while artificial neural networks are inspired by biological nervous systems, it has long been observed that backpropagation is not biologically plausible \cite{Grossberg1987, Crick1989, Whittington2019}.  Several biologically plausible alternatives to backpropagation have been proposed in the literature, among them so-called \emph{energy-based learning algorithms} \cite{Movellan1991, Hinton1989, Lecun2006, Scellier2017, Scellier2021a, Stern2021a, Scellier2023, Yi2023a}. These algorithms apply to energy-based models, which come equipped with some generalized notion of energy, and associate to each input a minimum of this energy.  The basic idea is to probe the system in two states, one free and one \emph{clamped}, or dictated by the training data, and use the energy difference between these states as a cost function.  An iterative procedure is then applied to minimise this cost function.  Several clamping mechanisms and iterative procedures have been defined, among them Contrastive Learning \cite{Movellan1991, Baldi1991, Hinton1989}, Equilibrium Propagation \cite{Scellier2017}, Coupled Learning \cite{Stern2021a} and Temporal Contrastive Learning \cite{Falk2025}. These algorithms all resemble gradient descent, where the gradient of the cost function is replaced by a quantity which may be computed in a distributed manner across a network.  

The energy-based learning paradigm is particularly suited to learning in analog electronic devices, as they have a natural notion of generalized energy: the heat dissipated by electrical resistance (in this case, a power rather than energy).  Learning in analog electronics was first investigated in the 1980s \cite{Chua1984, Harris1989a, Hopfield1984, Hopfield1985a, Hutchinson1988, Horn1988}, and has seen a recent resurgence. This is, in part, due to the ability of analog circuits to perform inference many times faster than conventional neural networks \cite{Kendall2020b, Laydevant2024, Dillavou2024}.  Energy-based learning algorithms offer two advantages over backpropagation in this context: firstly, gradients are estimated by probing the circuit, removing the expensive gradient computations of backpropagation and automatically compensating for any device inconsistencies; secondly, updates are made in a distributed fashion, without any centralized information, allowing the learning algorithms to be implemented directly in the analog circuit, avoiding any transport delay with a central processor \cite{Dillavou2024, Stern2023, Yi2023a, Mehonic2024}.

The fact that energy-based algorithms resemble gradient descent raises the question of whether these algorithms do, in fact, minimize any sensible loss function.  This question was first addressed by Hinton \cite{Hinton1989} in the context of Deterministic Boltzmann Machines, where he showed that contrastive learning is equal to gradient descent in the limit of infinitesimal step size.  \blue{It was shown soon after by Movellan \cite{Movellan1991} that contrastive learning, applied to Hopfield neural networks, performs gradient descent for any step size.  Similar arguments, either approximate or exact, have been made for Equilibrium Propagation \cite{Scellier2017} and Coupled Learning \cite{Stern2021a}.  However, to the best of our knowledge, there are no known guarantees of convergence for these algorithms.} 

In this paper, we treat \emph{Contrastive Learning} \cite{Movellan1991, Baldi1991, Hinton1989}, applied to a network of linear resistors with adjustable conductances, where two sets of node potentials correspond to the input and output, respectively.  This mimics the experimental setup of \cite{Dillavou2022}.  Contrastive Learning works by measuring the voltage drop across each resistor when output potentials are free or clamped to a desired value dictated by the training data, and uses the difference in dissipated power between these two states as a cost function.  The conductance of each resistor is adjusted according to the difference in squared voltage between these two states. \blue{Our main result, Theorem~\ref{thm_mainresult}, proves that in this setting, Contrastive Learning is guaranteed to converge for a specific set of conductances.  This result is developed via a set of auxiliary results, showing in particular that the contrastive cost function is convex (Lemma~\ref{lemma_propertiesJacobian}) and its gradient is Lipschitz continuous (Lemma~\ref{lemma_lipschitz_constant}).  We furthermore extend this result to stochastic gradient descent, proving convergence of a stochastic variant of Contrastive Learning in Theorem~\ref{thm_stochasticresult}.}

Our results rely on connections between learning theory, electrical circuit theory, and distributed convex optimization.  A class of optimization algorithms known as  \emph{consensus algorithms} rely on alternating an averaging step across a network with a local update step \cite{Ryu2022, Ryu2016, Boyd2010, Parikh2013}. This has some similarity with the situation studied here: the electrical interconnection allows information from local updates to be shared between resistors.  In particular, the voltage across each resistor depends not only on its conductance, which is updated based upon local information, but also on the conductances of the other resistors in the network.  This ``physics-based'' communication between resistors enables the convergence to a global objective by means of local updates.  The connection between distributed optimization and electrical networks is classical \cite{Minty1960, Rockafellar1984}, and has recently been exploited to develop circuit simulation algorithms \cite{Chaffey2023, Chaffey2023b}.  The analysis of optimization algorithms using control theory is a subject of recent interest \cite{Lessard2022, Scherer2023, Dorfler2024}, and our results represent a similar approach applied to distributed learning algorithms.  Our approach also bears some similarity to the use of passivity-based tools to study convergence in population games \cite{Fox2013, Park2019, Arcak2021}.

The remainder of this paper is structured as follows. In Section~\ref{sec:prob}, we introduce the circuit structure and formulate the learning problem. In Section~\ref{sec:algorithm}, we introduce the Contrastive Learning algorithm.  Our main convergence result is stated and proved in Section~\ref{sec:convergence}.  In Section~\ref{sec:stochastic}, this result is extended to the stochastic case.  In Section~\ref{sec:simulation}, several computational examples are given.

\subsection{Notation}
The identity matrix is denoted by $I$, $\mathds{1}$ represents a vector of ones, and $e_k$ is the standard $k$-th basis vector of $\mathds{R}^n$. The diagonal matrix with the entries of a vector $x \in \R^n$ on the diagonal is denoted by $\diag(x)$. Given $A\in \R^{n \times m}$ and $B\in \R^{n\times m}$, the operation $A \odot B$ denotes the Hadamard or element-wise product of $A$ and $B$, that is, $A \odot B \in \R^{n\times m}$ with elements $(A \odot B)_{k\ell} = (A)_{k\ell}(B)_{k\ell}$. Given $x \in \R^n$, we define $x^2 = x \odot x$.

The Euclidean norm of a vector $x\in \R^n$ is given by $\|x\| = \sqrt{x^\T x}$ and its $1$-norm is $\|x\|_1 = \sum_{k=1}^n |x_k|$. Given a matrix $A\in \R^{n\times m}$, $\sigma_{\max}(A)$ denotes the largest singular value of $A$. If $A\in \R^{n\times n}$ is symmetric, $\lambda_{\max}(A)$ denotes its largest eigenvalue. The spectral norm of $A$ is denoted by $\|A\|$ and equals $\|A\| = \sigma_{\max} (A)$. For symmetric and positive semi-definite $A$, we have $\|A\| = \lambda_{\max} (A)$. 

A matrix $A\in\R^{n\times n}$ is called an $M$-matrix if it can be expressed in the form $A = sI - B$ where $B \in \R^{n\times n}$ is entrywise nonnegative and $s \geq \rho(B)$. Here, $\rho(B)$ denotes the spectral radius of $B$, i.e., $\rho(B) = \max \{|\lambda_1|, |\lambda_2|, \ldots, |\lambda_n|\}$ with $\lambda_1, \lambda_2, \ldots, \lambda_n$ the eigenvalues of $B$. 

\section{Problem formulation}\label{sec:prob}

\subsection{Resistive networks}
Consider a connected undirected graph $\Gcal = (\Vcal, \Ecal)$ where $\Vcal = \{1,2,\ldots, N\}$ represents a set of nodes, $\Ecal \subseteq \Vcal \times \Vcal$ is the set of $B$ branches whose elements are unordered pairs $\{k, \ell\}$ with $k,\ell \in \Vcal$. The branches in this network represent resistors; the points connecting them are the nodes in the network. After assigning an arbitrary orientation to each branch in $\Gcal$, the incidence matrix $D\in \R^{N \times B}$ captures the graph structure of $\Gcal$. Here, every column of $D$ corresponds to a branch in $\Gcal$ and reads $e_k-e_\ell$ for a branch $\{k,\ell\}$ oriented from $k$ to $\ell$ with $k, \ell \in \Vcal$.  

Let $p\in \R^N$ denote the vector of voltage potentials at the nodes and $j\in \R^N$ the vector of nodal currents entering the nodes. By Kirchhoff's voltage law, the vector of voltages $v\in \R^B$ across the branches is related to the vector of voltage potentials as
    \begin{align}\label{eq_KVL}
        v = D^\T p. 
    \end{align}
Dually, Kirchhoff's current law links the vector of nodal currents to the vector of currents through the branches $i\in \R^B$ as
    \begin{align}\label{eq_KCL}
        j = Di. 
    \end{align}
We assume that all resistors in the network are linear and gather their conductance values, all assumed to be positive, in the vector $g \in \R^B$. We also define the corresponding diagonal matrix $G = \diag(g)$. It follows that Ohm's law can be expressed as 
\begin{align}\label{eq_Ohmslaw}
    i = G v. 
\end{align}
Then, by combining \eqref{eq_KVL}, \eqref{eq_KCL}, and \eqref{eq_Ohmslaw}, the vector of voltage potentials and the vector of nodal currents are related as 
\begin{align} \label{eq_networkdescrip}
    j = DGD^\T p = Lp, 
\end{align}
where $L = DGD^\T$ denotes the Laplacian matrix of $\Gcal$.

In this paper, we distinguish two types of nodes: input and output nodes, see Figure~\ref{fig_examplenetwork}. Each input node is connected to a source imposing a voltage potential, whereas the output nodes are not exposed to external stimuli. We let $\Vcal_I$ and $\Vcal_O$ be the set of input and output nodes, respectively, satisfying $\Vcal = \Vcal_I \cup \Vcal_O$ and $\Vcal_I\cap \Vcal_O = \emptyset$. We let $N_I$ and $N_O$ denote the cardinality of $\Vcal_I$ and $\Vcal_O$, respectively, such that $N_I + N_O = N$. 
After reordering the rows of $p$, $j$, and $D$, it follows that $p$, $j$, and $D$ can be partitioned into a part belonging to the input nodes and a part belonging to the output nodes: 
\begin{align}\label{eq_partition}
    p = \begin{pmatrix} p_I \\ p_O \end{pmatrix},\, j = \begin{pmatrix} j_I \\ j_O \end{pmatrix}, \text{ and } D = \begin{pmatrix} D_I \\ D_O \end{pmatrix},
\end{align}
with $p_I, j_I \in \R^{N_I}$, $p_O, j_O \in \R^{N_O}$, $D_I \in \R^{N_I \times B}$ and $D_O \in \R^{N_O \times B}$. The vector $p_I$ is assumed to be given, as it results from the sources at the input nodes. On the other hand, as there are no sources at the output nodes, we have $j_O=0$.
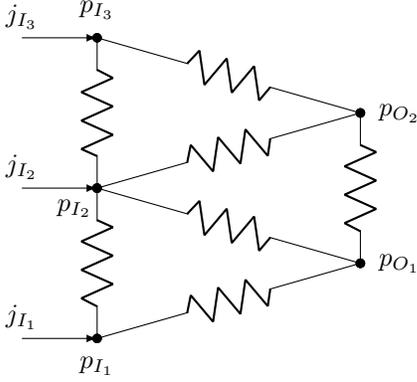
\begin{figure}\centering 
			\begin{circuitikz}[scale=1,transform shape]
				\draw
				(0,0) to [short, *-*, R = $$] (3.5,1)
				to [short, *-*, R = $$] (3.5,3)
				(0,0) to [short, *-*, R = $$] (0,2)
				to [short, *-*, R = $$] (3.5,3)
                (0,4) to [short, *-*, R = $$] (3.5,3)
                (0,4) to [short, *-*, R= $$] (0,2)
				(0,2) to [short, *-*, R = ] (3.5,1);
				
				\draw (0,0) node[label={below:$p_{I_1}$}] {};
				\draw (-0.3,2.1) node[label={below:$p_{I_2}$}] {};
                \draw (0, 4) node[label={above:$p_{I_3}$}] {};
			    \draw (3.5,1) node[label={right:$p_{O_1}$}] {};
				\draw (3.5,3) node[label={right:$p_{O_2}$}] {};
				
                \draw[-latex] (-1,0) -- (0, 0) ;
				\draw (-1, 0) node[above] {$j_{I_1}$};
				\draw[-latex] (-1,2) -- (0, 2) ;
				\draw (-1, 2) node[above] {$j_{I_2}$};
                \draw[-latex] (-1,4) -- (0, 4) ;
				\draw (-1, 4) node[above] {$j_{I_3}$};
			\end{circuitikz}
			\caption{Resistive electrical network with three input and two output nodes connected to a source applying a current $j_I$.}
            \label{fig_examplenetwork}
\end{figure}
Then, using \eqref{eq_partition} and $j_O=0$, the model \eqref{eq_networkdescrip} can be equivalently written as 
\begin{align}\label{eq_fullnetworkdescription}
    \begin{pmatrix}
        j_I \\ 0
    \end{pmatrix} = \begin{pmatrix}
        \DI G \DI^\T & \DI G \DO^\T \\ \DO G \DI^\T & \DOGDO
    \end{pmatrix}\begin{pmatrix}
        p_I \\ p_O
    \end{pmatrix},
\end{align}
which can be solved for $p_O$ as 
\begin{align}\label{eq_rela_pIpO_withoutdependence}
    \pO = - (\DO G \DO^\T)^{-1}\DO G \DI^\T \pI.
\end{align}
Here, we used that $\DO G \DO^\T$ is invertible, as $\Gcal$ is connected, see \cite[Theorem~3.4]{Schaft2010}. It follows, by combining the above with \eqref{eq_KVL} and \eqref{eq_partition}, that the voltages in the network are distributed as  
\begin{align}\label{eq_minimisingv_withoutdependence}
    v = D^\T \begin{pmatrix}
            I \\ - (\DO G \DO^\T)^{-1}\DO G \DI^\T
        \end{pmatrix}\pI. 
\end{align}
The corresponding distribution of currents follows from \eqref{eq_Ohmslaw}.

The total power in the network is given by 
\begin{align}\label{eq_totalpower}
    i^\T v = v^\T G v. 
\end{align}
Using \eqref{eq_KVL} and \eqref{eq_partition}, \eqref{eq_totalpower} can be equivalently written as 
\begin{align}
    S(\pI,\pO) := (\DI^\T\pI + \DO^\T\pO)^\T G(\DI^\T\pI + \DO^\T\pO).\label{eq_power}
\end{align}

\begin{remark}\label{rem_min_total_power}
    We observe that, for every fixed $\pI$, $\pO$ in \eqref{eq_rela_pIpO_withoutdependence} minimises the total power $S(\pI, \pO)$, see \cite[Proposition~3.6]{Schaft2010}.
\end{remark}

\subsection{Problem statement}
We consider a network of linear resistors whose conductance values are adjustable, i.e., the entries of the vector of conductances $g$ can be varied, mimicking the experimental setup of \cite{Dillavou2022}. We impose the physical constraint that conductance values are always greater than some lower bound $\epsilon > 0$, determined by the hardware implementation. For $\epsilon > 0$, we define the set $\Ccal_{\epsilon} \subset \R^B$ by 
\begin{align}
    \Ccal_\epsilon := \{ x\in \R^B \, | \, x_k \geq \epsilon, \, k = 1, 2, \ldots, B\}, \label{eq:ceps}
\end{align}
and we require that $g \in \Ccal_{\epsilon}$.

In this setup, for a given vector of conductances $g$, it follows from \eqref{eq_rela_pIpO_withoutdependence} that the vector of output potentials $p_O$ equals
\begin{align}\label{eq_rela_pIpO}
    \pO(g) = - (\DO G \DO^\T)^{-1}\DO G \DI^\T \pI,
\end{align}
where we recall that $G = \diag(g)$. Similarly, it can be obtained by \eqref{eq_minimisingv_withoutdependence} that the voltages in the network satisfy
\begin{align}\label{eq_minimisingv}
    v(g) = D^\T \begin{pmatrix}
            I \\ - (\DO G \DO^\T)^{-1}\DO G \DI^\T
        \end{pmatrix}\pI. 
\end{align}
Given the vector $p_I \in \mathbb{R}^{N_I}$ of input potentials and a vector $p_O^D \in \R^{N_O}$ of \emph{desired} output potentials, this paper aims to determine conductances $g^* \in \Ccal_{\epsilon}$ such that $p_O(g^*) = \pO^D$. We will assume this to be possible. 
\blue{\begin{assumption}\label{as_existence} 
There exists a vector of conductances $g^D \in \Ccal_{\epsilon}$ such that
\begin{align}\label{eq_rela_pIpO_desired}
    \pO^D = - (\DO G^D \DO^\T)^{-1}\DO G^D \DI^\T \pI
\end{align}
where $G^D = \diag(g^D)$. 
\end{assumption}}
We stress that $p_O^D$ is given, but $g^D$ is unknown.  

In other words, in this paper, we are interested in learning a desired mapping between input and output potentials by adjusting the conductances of the resistors in the network. We want to realise these updates without any central processing, i.e., by implementing the updates on the conductance values using only local controllers for each resistor. These local controllers will be designed so that they only make use of local conductance and voltage measurements. The learning objective is formalized as follows. 
\begin{obj} \label{objective}
    Consider a connected graph $\Gcal$ and let $p_I\in \R^{N_I}$ and $p_O^D\in \R^{N_O}$. Find a sequence $\bigl(g^t\bigr)_{t\in \N}$ in $\Ccal_\epsilon$ where\blue{, for $k=1,2,\dots,B$,} $g_k^{t+1}$ is \blue{a function of} $g_k^t$ and $v_k\bigl(g^t\bigr)$, such that $g^t \to g^*$ for some $g^* \in \Ccal_\epsilon$ satisfying $p_O(g^*) = p_O^D$.
\end{obj}

\blue{Note that, by Assumption~\ref{as_existence}, there exists at least one $g^* \in \Ccal_\epsilon$ satisfying $p_O(g^*) = p_O^D$.  This assumption represents a form of well-posedness: there must exist a circuit configuration which realises the mapping we are trying to learn.}
A classical experimental setup often used to meet Objective~\ref{objective} is Contrastive Learning. In this paper, we will study this algorithm and formally prove its convergence.

\section{The Contrastive Learning algorithm}\label{sec:algorithm}
In this section, we introduce the Contrastive Learning algorithm \cite{Movellan1991, Baldi1991, Hinton1989} as an approach to meet Objective~\ref{objective}. This algorithm considers networks of resistors in two states, a free state and a clamped state; this is illustrated for an example network in Figure~\ref{fig_example_clampedandfreecase}.  In both states, a source connected to the input nodes imposes a vector of input potentials $p_I$ to the network. Additionally, in the clamped state, the output nodes are connected to a source that clamps the output potentials to a desired vector $p_O^D$. It follows that the voltages in the free state of the network are distributed as in \eqref{eq_minimisingv} and the voltages $v^D$ in the clamped state of the network can be derived, by combining \eqref{eq_KVL} and \eqref{eq_partition}, as 
\begin{align}\label{eq_desiredv}
    v^D = D^\T \begin{pmatrix} p_I \\ p_O^D
    \end{pmatrix}.
\end{align} 

\begin{figure*}
        \begin{subfigure}[b]{0.49\textwidth}
        \centering
			\begin{circuitikz}[scale=1,transform shape]
				\draw
				(0,2.6) to [short, *-*, vR = $g_5^t$] (4,5.2)
				(0,2.6) to [short, *-*, vR = $g_4^t$] (4,2.6)
				(0,2.6) to [short, *-*, vR = $g_3^t$] (4,0)
                (4,2.6) to [short, *-*, vR = $g_6^t$] (4,0)
                (0,0) to [short, *-*, vR = $g_2^t$] (4,0)
                (4,5.2) to [short, *-*, vR = $g_7^t$] (4,2.6)
				(0,2.6) to [short, *-*, vR = $g_1^t$] (0,0);
				
				\draw (0,0) node[label={below:$p_{I_1}$}] {};
				\draw (0, 2.6) node[label={above:$p_{I_2}$}] {};
                \draw (4, 5.2) node[label={above:$p_{O_3}$}] {};
			    \draw (4,0) node[label={below:$p_{O_1}$}] {};
				\draw (4.4,2.7) node[label={below:$p_{O_2}$}] {};
				
                \draw[-latex] (-1.5,0) -- (0, 0) ;
				\draw (-1.5,0) node[above] {$j_{I_1}$};
				\draw[-latex] (-1.5,2.6) -- (0, 2.6) ;
				\draw (-1.5, 2.6) node[above] {$j_{I_2}$};
			\end{circuitikz}
        \caption{Free state}
        \end{subfigure}%
    \hfill
    \begin{subfigure}[b]{0.49\textwidth}
    \centering
    \begin{circuitikz}[scale=1,transform shape]
				\draw
				(0,2.6) to [short, *-*, vR = $g_5^t$] (4,5.2)
				(0,2.6) to [short, *-*, vR = $g_4^t$] (4,2.6)
				(0,2.6) to [short, *-*, vR = $g_3^t$] (4,0)
                (4,2.6) to [short, *-*, vR = $g_6^t$] (4,0)
                (0,0) to [short, *-*, vR = $g_2^t$] (4,0)
                (4,5.2) to [short, *-*, vR = $g_7^t$] (4,2.6)
				(0,2.6) to [short, *-*, vR = $g_1^t$] (0,0);
				
				\draw (0,0) node[label={below:$p_{I_1}$}] {};
				\draw (0, 2.6) node[label={above:$p_{I_2}$}] {};
                \draw (4, 5.2) node[label={above:$p_{O_3}^D$}] {};
			    \draw (4,0) node[label={below:$p_{O_1}^D$}] {};
				\draw (4.4,2.5) node[label={above:$p_{O_2}^D$}] {};
				
                \draw[-latex] (-1.5,0) -- (0, 0) ;
				\draw (-1.5,0) node[above] {$j_{I_1}$};
				\draw[-latex] (-1.5,2.6) -- (0, 2.6) ;
				\draw (-1.5, 2.6) node[above] {$j_{I_2}$};
                \draw[-latex] (5.5,0) -- (4, 0) ;
				\draw (5.5,0) node[above] {$j_{O_1}$};
				\draw[-latex] (5.5,2.6) -- (4, 2.6) ;
				\draw (5.5, 2.6) node[above] {$j_{O_2}$};
                \draw[-latex] (5.5,5.2) -- (4, 5.2) ;
				\draw (5.5, 5.2) node[above] {$j_{O_3}$};
			\end{circuitikz}
    \caption{Clamped state}
    \end{subfigure}
			\caption{Illustrations of a network of linear resistors with adjustable conductances at a time-step $t$. In both states, the vector of voltage potentials equals $p_I$, whereas in (a) the output potentials are free and in $(b)$ the output potentials are clamped to a desired value $p_O^D$ dictated by the training data.}
            \label{fig_example_clampedandfreecase}
\end{figure*}
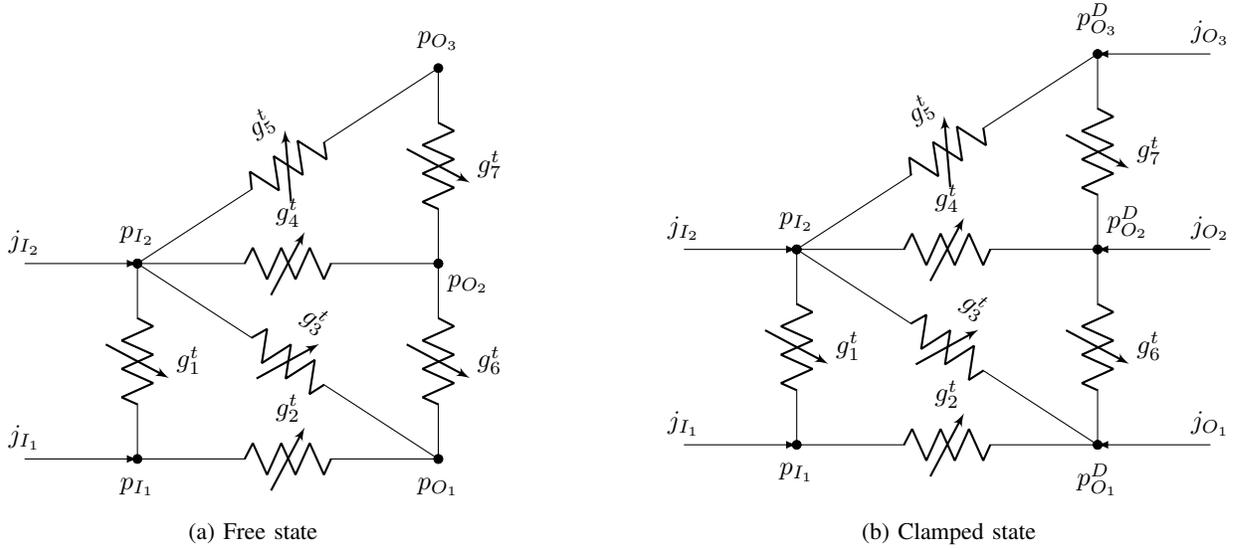

\begin{figure}[]
        \begin{center}  
            \begin{tikzpicture}[auto, node distance=0.5cm,>=latex']
            \node [input, name=input] {};
            \node [sum, right=of input] (sum) {};
            \node [block, right=0.8cm of sum] (controller) {$g^{t+1} = P_{\Ccal_{\epsilon}} \Bigl(g^t - \gamma u^t\Bigr)$};
            \node [output, right=0.8cm of controller] (output) {};
            \node at ($(controller)+(0,-1.5)$) [block] (feedback) {circuit};
            \node at ($(controller)+(-1.8,-1.2)$) {$\bigl(v(g^{t})\bigr)^2$};
            \node at ($(controller)+(1.8,-1.2)$) {$g^{t}$};
            \draw [draw,->] (input) -- node {$\bigl(v^D\bigr)^2$} (sum);
            \draw [->] (sum) -- node {$u^t$} (controller);
            \draw [->] (controller) -- node [name=y] {}(output);
            \draw [->] (y) |- (feedback);
            \draw [->] (feedback) -| node[pos=0.95] {$-$} (sum);
            \end{tikzpicture} 
        \end{center}
        \caption{The proposed algorithm at time-step $t$ expressed as a feedback system having as input the difference between the element-wise squared vector of desired voltages $v^D$ and the current voltages $v(g^{t})$ and as output the vector of conductances~$g^{t+1}$. }
        \label{fig_algorithm}
\end{figure}
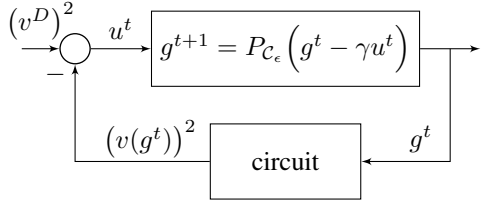

Now, to introduce the Contrastive Learning algorithm, we define the cost function \blue{$Q:(0, \infty)^B\rightarrow \R_+$} as
\begin{align}\label{eq_learningcost_step1}
    Q(g) = \bigl(v^D\bigr)^\T G v^D - v(g)^\T G v(g). 
\end{align}
Here, $v^D$ and $v$ are given by \eqref{eq_desiredv} and \eqref{eq_minimisingv}, respectively. The learning cost function \eqref{eq_learningcost_step1} takes the difference between the power dissipated by the network given the desired voltages and the actual power dissipated by the network \eqref{eq_totalpower}. As $v$ is such that the total power in the network is minimized (see Remark~\ref{rem_min_total_power}), \blue{$Q(g)$} is nonnegative for all $g\in \Ccal_{\epsilon}$. Furthermore, we note that $Q(g^*) = 0$ for any $g^*$ satisfying $p_O(g^*) = p_O^D$. 
\blue{It follows that Objective~\ref{objective} can be achieved by solving the following optimization problem.}

\blue{
\begin{problem}\label{problem1}
Find a minimiser $g^\ast$ of the optimization problem
\begin{IEEEeqnarray*}{rl}
    \min_{g}&\: Q(g) \\ 
    \text{subject to} &\: g \in \Ccal_\epsilon.
\end{IEEEeqnarray*}
\end{problem}
}

\blue{A classical result of Movellan \cite{Movellan1991} shows that the gradient of $Q$ is distributed in $g_k$ for all branches $k\in \{1, \ldots, B\}$.  The following lemma states Movellan's result for the resistive networks we consider here.}

\blue{
\begin{lemma}\label{lem_Movellan}
    Given the cost function $Q$ defined in \eqref{eq_learningcost_step1}, we have
    \begin{equation}\label{eq_Movellan}
        \nabla Q\bigl(g\bigr) = \bigl(v^D\bigr)^2 - \bigl(v(g)\bigr)^2.
    \end{equation}
\end{lemma}
\begin{proof}
    Let $S(p_I, p_O(g), g)$ denote the total power dissipated by the network for a given set of conductances $g$, as defined in \eqref{eq_power}. Then, we can write \eqref{eq_learningcost_step1} as
    \begin{align*}
        Q(g) = \bigl(v^D\bigr)^\T G v^D - S(p_I, p_O(g), g).
    \end{align*}
    Its gradient with respect to $g$ can be computed as 
    \begin{align*}
        \nabla Q\bigl(g\bigr) = \bigl(v^D\bigr)^2 - \nabla S(p_I, p_O(g), g)
    \end{align*}
    and equals 
    \begin{align*}
       \nabla Q(g) =  \bigl(v^D\bigr)^2 - \left(\frac{\partial p_O}{\partial g}\right)^\top\frac{\partial S(\pI, \pO(g), g))}{\partial p_O} - \bigl(v(g)\bigr)^2.
    \end{align*}
    It follows from Remark~\ref{rem_min_total_power} that $\partial S(\pI, \pO(g), g)/\partial p_O = 0$, leading to the desired result \eqref{eq_Movellan}. 
\end{proof}}
\vspace{3mm}

\blue{A standard approach to solve Problem~\ref{problem1} is to perform (projected) gradient descent on $Q$, using the update rule 
\begin{align}\label{eq_introductionalgorithm}
    g^{t+1} = P_{\Ccal_\epsilon} \left(g^t - \gamma \nabla Q\left(g^t\right) \right),
\end{align}
where at each time-step $t = 0,1,2,\dots$, the gradient of $Q$ at $g^t$ is computed by \eqref{eq_Movellan} and \eqref{eq_minimisingv}.
Here, $g^t\in \R^B$ denotes the vector of conductances at the time-step $t$ and $\gamma >0$ is the step-size. The projection mapping $P_{\Ccal_{\epsilon}}: \R^B \rightarrow \Ccal_\epsilon$, that maps a vector $g\in \R^B$ to the closest vector $\hat{g} \in \Ccal_{\epsilon}$, is defined as
\begin{align}\label{eq_projection}
    P_{\Ccal_{\epsilon}} (g) : = \arg\min_{\hat{g}\in \Ccal_\epsilon} \| \hat{g} - g\|
\end{align}
and ensures that $g^t \in \Ccal_{\epsilon}$ for all time-steps $t$.}

This algorithm is illustrated in Figure~\ref{fig_algorithm} as a feedback system. This approach gives a distributed algorithm as an update on the conductance value $g^t_k$ at a branch $k$ only requires information about the desired voltage $v^D_k$ and voltage $v_k(g^t)$ at that branch. Furthermore, updates on the voltages in the network are an immediate consequence of the physics of the electrical network and do not require supervision. The algorithm \eqref{eq_introductionalgorithm} combined with \eqref{eq_minimisingv} constitutes Contrastive Learning. 

\begin{algorithm}
\caption{Contrastive Learning}\label{alg_ContrastiveLearning}
Let $g^0 \in \Ccal_{\epsilon}$ be given. Repeat the following steps for each time-step $t = 0, 1, 2, \ldots$. 
\begin{enumerate}
    \item Compute $g^{t+1}$ by
        \begin{align}\label{eq_algorithm}
            g^{t+1} = P_{\Ccal_\epsilon} \Bigl(g^t - \gamma\blue{\nabla Q\left(g^t\right)}\Bigr).
        \end{align}
    \item Determine $v(g^{t+1})$ by computing \eqref{eq_minimisingv} for $g^{t+1}$.
\end{enumerate}
\end{algorithm}

\blue{Algorithm~\ref{alg_ContrastiveLearning} gives a distributed update rule which performs projected gradient descent on $Q$.  However, proving the convergence of the contrastive learning algorithm is non-trivial and will be the main contribution of this paper.}

\section{Convergence of Contrastive Learning}\label{sec:convergence}
In the previous section, we derived a Contrastive Learning algorithm for learning in resistor networks with adjustable conductance values. In this section, we show that this algorithm generates a sequence of iterates satisfying Objective~\ref{objective}. \blue{We show that the cost function $Q$ and its gradient satisfy the convexity and Lipschitz continuity properties necessary to apply standard results on the convergence of projected gradient descent, e.g., \cite[$\S$2.4.3]{Ryu2022}.} 
 
\subsection{Preliminaries on fixed-point iterations}
We begin by introducing some technical machinery.  Let $\Ccal \subseteq \R^B$. A function $f$ is said to be Lipschitz continuous on $\Ccal$ if there exists $K\geq 0$ such that 
\begin{align*}
    \|f(x) - f(y)\| \leq K \|x-y\| 
\end{align*}
for all $x, y \in \Ccal$. We call $f$ non-expansive if it is Lipschitz continuous with $K = 1$. Moreover, $f$ is called $\theta$-averaged if we can write $f(x) =  (1-\theta)x + \theta \Tilde{f}(x)$ for some non-expansive function $\Tilde{f}$ and some $\theta \in (0,1)$. Averaged functions satisfy the following property. 
\vspace{3mm}
\begin{lemma}(\cite[Thm. 27]{Ryu2022}) \label{thm_composition_averagedfunc} Let $f_1:\Ccal\rightarrow \Ccal$ and $f_2:\Ccal\rightarrow \Ccal$ be $\theta_1$- and $\theta_2$-averaged functions with $\theta_1, \theta_2 \in (0,1)$. Then the composition of $f_1$ and $f_2$, i.e., $f_1 \circ f_2$, is $\theta$-averaged with 
\begin{align*}
    \theta = \frac{\theta_1 + \theta_2 - 2\theta_1\theta_2}{1- \theta_1\theta_2}.
\end{align*}
\end{lemma}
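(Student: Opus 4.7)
The plan is to reduce the composition statement to an equivalent quadratic characterisation of $\theta$-averaged operators, chain the resulting inequalities for $f_2$ followed by $f_1$, and then optimise a Young-type scalar to recover the exact constant claimed in the lemma.

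First, I would establish (or recall) the following equivalent characterisation: $f : \Ccal \to \Ccal$ is $\theta$-averaged if and only if
\begin{align*}
\|f(x)-f(y)\|^2 + \tfrac{1-\theta}{\theta}\|(I-f)(x) - (I-f)(y)\|^2 \leq \|x-y\|^2
\end{align*}
for every $x,y \in \Ccal$. One direction follows by writing $f=(1-\theta)I+\theta\tilde f$, expanding $\|f(x)-f(y)\|^2$ via the identity $\|\alpha u+(1-\alpha)v\|^2=\alpha\|u\|^2+(1-\alpha)\|v\|^2-\alpha(1-\alpha)\|u-v\|^2$, noting that $(I-f) = \theta(I - \tilde f)$, and applying non-expansiveness of $\tilde f$; the converse uses the same identity in reverse.

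Next, I would apply this inequality to $f_2$ at $x,y$ and to $f_1$ at $u=f_2(x)$, $v=f_2(y)$. Writing $R_i:=I-f_i$ and $F:=f_1\circ f_2$, a short telescoping argument yields
\begin{align*}
\|F(x)-F(y)\|^2 \leq \|x-y\|^2 - \tfrac{1-\theta_2}{\theta_2}\|R_2(x)-R_2(y)\|^2 - \tfrac{1-\theta_1}{\theta_1}\|R_1(u)-R_1(v)\|^2.
\end{align*}
Since $(I-F)(x)-(I-F)(y)=[R_2(x)-R_2(y)]+[R_1(u)-R_1(v)]$, the remaining task is to bound $\|(I-F)(x)-(I-F)(y)\|^2$ from above in terms of the two squared-norm terms on the right-hand side. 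Young's inequality $\|a+b\|^2 \leq (1+\lambda)\|a\|^2+(1+\lambda^{-1})\|b\|^2$ with a free parameter $\lambda>0$ delivers the sought characterisation for $F$ with constant $c=(1-\theta)/\theta$ subject to the two constraints $c(1+\lambda)\leq (1-\theta_2)/\theta_2$ and $c(1+\lambda^{-1})\leq (1-\theta_1)/\theta_1$.

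The main obstacle is choosing $\lambda$ to maximise the admissible $c$ (equivalently, to obtain the tightest averagedness constant). Because both constraints must hold, the optimum occurs when they are active simultaneously, giving $\lambda=\theta_1(1-\theta_2)/[\theta_2(1-\theta_1)]$; substituting and simplifying produces $c=(1-\theta_1)(1-\theta_2)/(\theta_1+\theta_2-2\theta_1\theta_2)$, so that
\begin{align*}
\theta = \frac{\theta_1+\theta_2-2\theta_1\theta_2}{(1-\theta_1)(1-\theta_2)+\theta_1+\theta_2-2\theta_1\theta_2}=\frac{\theta_1+\theta_2-2\theta_1\theta_2}{1-\theta_1\theta_2},
\end{align*}
exactly as claimed. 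A brief sanity check confirms $\theta\in(0,1)$ whenever $\theta_1,\theta_2\in(0,1)$, completing the argument. The technical care is entirely in the Young optimisation; the rest is bookkeeping.
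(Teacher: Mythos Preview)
Your argument is correct and is, in fact, the standard proof of this result (essentially the Ogura--Yamada/Combettes--Yamada argument, reproduced in Bauschke--Combettes). The paper itself does not prove this lemma at all: it is quoted verbatim from \cite[Thm.~27]{Ryu2022} and used as a black box, so there is no ``paper's proof'' to compare against. Your sketch therefore supplies strictly more than the paper does, and every step---the quadratic characterisation of averagedness, the telescoping of the two inequalities, the decomposition $(I-F)=R_2 + R_1\circ f_2$, and the Young optimisation giving $\lambda=\theta_1(1-\theta_2)/[\theta_2(1-\theta_1)]$---checks out.
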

\vspace{3mm}
Next, let $\Ccal$ be convex, i.e., for all $x,y \in \Ccal$ and $\theta \in (0,1)$ we have $\theta x + (1-\theta) y \in \Ccal$. Then, $f: \Ccal \rightarrow \Ccal$ is called convex if
\begin{align*}
    f(\theta x + (1 - \theta)y) \leq \theta f(x) + (1-\theta)f(y)
\end{align*}
for all $x, y \in \Ccal$ and $\theta \in (0, 1)$.

A fixed-point iteration (FPI) is an algorithm of the form 
\begin{align}\label{eq_FPI_prelim}
    x^{t+1} = f(x^t)
\end{align}
for some function $f:\Ccal \rightarrow \Ccal$, time-step $t = 0,1,2, \ldots$, and starting point $x^0\in \Ccal$. A vector $x\in \Ccal$ is a fixed point of $f$ if $x = f(x)$. The set of fixed points of $f$  is denoted by 
\begin{align*}
    \Fix f = \{ x\in \Ccal \, | \, x = f(x) \}.
\end{align*}

When $f$ is averaged, the FPI \eqref{eq_FPI_prelim} is called a Krasnosel'ski\u{\i}-Mann iteration and converges to a fixed point if one exists. This is recalled in the following theorem.
\vspace{3mm}
\begin{theorem} \label{thm_Krasnoselskii-Mann}
Let $\Ccal$ be a nonempty closed convex subset of $\R^B$. Assume $f: \Ccal \rightarrow \Ccal$ is $\theta$-averaged with $\theta \in (0,1)$ and $\Fix f \neq \emptyset$. Then, \eqref{eq_FPI_prelim} with any starting point $x^0\in \Ccal$ converges to a fixed point, i.e., 
\begin{align*}
    \lim_{t\rightarrow \infty} x^t = x^*
\end{align*}
for some $x^* \in \Fix f$.
\end{theorem}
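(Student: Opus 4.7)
The plan is to follow the classical Krasnosel'ski\u{\i}--Mann argument. I would first invoke the averaged decomposition $f = (1-\theta)I + \theta\tilde{f}$ with $\tilde{f}$ nonexpansive, fix any $x^* \in \Fix f$ (which is automatically a fixed point of $\tilde{f}$ as well, since $f(x^*)=x^*$ and $\theta>0$ force $\tilde{f}(x^*)=x^*$), and rewrite the iteration as $x^{t+1} = (1-\theta)x^t + \theta\tilde{f}(x^t)$; in particular $x^{t+1} - x^t = \theta(\tilde{f}(x^t) - x^t)$, so controlling successive differences is the same as controlling how far $\tilde{f}$ moves each iterate.

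The central quantitative step is the vector-space identity
\begin{align*}
\|(1-\theta)a + \theta b\|^2 = (1-\theta)\|a\|^2 + \theta\|b\|^2 - \theta(1-\theta)\|a-b\|^2,
\end{align*}
applied with $a = x^t - x^*$ and $b = \tilde{f}(x^t) - x^*$. Combined with the nonexpansiveness bound $\|\tilde{f}(x^t) - x^*\| \leq \|x^t - x^*\|$, it yields the key descent inequality
\begin{align*}
\|x^{t+1} - x^*\|^2 \leq \|x^t - x^*\|^2 - \tfrac{1-\theta}{\theta}\|x^{t+1} - x^t\|^2.
\end{align*}
From this single inequality I would extract three facts: $\{\|x^t - x^*\|\}$ is nonincreasing (Fejér monotonicity); consequently $\{x^t\}$ is bounded; and telescoping gives $\sum_{t\geq 0}\|x^{t+1} - x^t\|^2 < \infty$, so $\|x^{t+1} - x^t\| \to 0$.

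To conclude, Bolzano--Weierstrass together with closedness of $\Ccal$ produces a subsequence $x^{t_k} \to \bar{x} \in \Ccal$. The residual map $r(x) := f(x) - x$ is continuous (since $f$ is $1$-Lipschitz), and $r(x^{t_k}) = x^{t_k+1} - x^{t_k} \to 0$, so $r(\bar{x}) = 0$ and $\bar{x} \in \Fix f$. Re-applying the descent inequality with $x^* = \bar{x}$, the sequence $\{\|x^t - \bar{x}\|\}$ is nonincreasing and has $0$ as a cluster value, hence converges to $0$, and therefore $x^t \to \bar{x}$. The only delicate point I anticipate is setting up the descent inequality correctly from the averaged decomposition --- this is exactly where the parameter $\theta$ buys the summability of successive differences that plain nonexpansiveness would not provide; once the inequality is in hand, the remainder is a standard Fejér-plus-cluster-point argument.
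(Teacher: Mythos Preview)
Your proof is correct and follows the same route as the paper: both unpack the averaged decomposition $f = (1-\theta)I + \theta\tilde f$, observe that $\Fix f = \Fix\tilde f$, and rewrite the iteration as the Krasnosel'ski\u{\i}--Mann scheme $x^{t+1} = x^t + \theta(\tilde f(x^t) - x^t)$. The only difference is that the paper then simply invokes \cite[Theorem~5.14]{Bauschke2010} for convergence, whereas you supply the full classical argument (the convex-combination identity, Fej\'er monotonicity, summability of residuals, and the cluster-point step) --- so your version is self-contained where the paper's is a citation.
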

\vspace{3mm}
\begin{proof}
    We recall that an $\theta$-averaged function $f$ with $\theta \in (0,1)$ equals $f = (1-\theta)I + \theta \tilde{f}$ for some non-expansive  function $\tilde{f}$. It follows by statement $iii)$ in \cite[Theorem~5.14]{Bauschke2010} that the iteration 
    \begin{align}\label{eq_iteration_pf}
        x^{t+1} = x^t + \theta(\tilde{f}(x^t) -x^t)
    \end{align}
    converges to a point in $\Fix \tilde{f}$, i.e., 
    \begin{align*}
        \lim_{k\rightarrow \infty} x^t = x^*
    \end{align*}
    for some $x^*\in \Fix\tilde{f}$. Then, since \eqref{eq_iteration_pf} equals \eqref{eq_FPI_prelim} and $\Fix \tilde{f} = \Fix f$, we obtain the desired result.
\end{proof}

\subsection{Rephrasing contrastive learning as a fixed-point iteration}
We introduce some new notation to write \eqref{eq_algorithm} in the form of an FPI. 
We define  $T:\Ccal_\epsilon \rightarrow \Ccal_\epsilon$ as 
\begin{align}\label{eq_FPI_func}
    T(g) := P_{\Ccal_{\epsilon}} \Bigl(g- \gamma \blue{\nabla Q(g)}\Bigr)
\end{align}
where $\Ccal_\epsilon$ is defined in \eqref{eq:ceps} and $\gamma >0$. Using this new notation, \eqref{eq_algorithm} can be written as the FPI 
\begin{align} \label{eq_FPI}
    g^{t+1} = T(g^t)
\end{align}
with $t = 0, 1, 2, \ldots$ indicating the time-step, and the starting point $g^0\in \Ccal_\epsilon$. The set of fixed points of $T$ reads 
\begin{align*}
    \Fix T = \{ g\in \Ccal_\epsilon \, | \, g = T(g)\}.
\end{align*}

Now, by using known results on the convergence of FPIs, we will show that the
iteration \eqref{eq_FPI} converges to a point in $\Fix T$, assuming $\Fix T$ is
non-empty, and determine that the iteration satisfies Objective~\ref{objective}. 

\subsection{The main result}

In this section, we show that the FPI \eqref{eq_FPI} converges towards a point in the set $\Fix T$ and the resulting sequence of conductances satisfies Objective~\ref{objective}. This is formalized in the following theorem. 
\vspace{3mm}
 \begin{theorem}\label{thm_mainresult}
        Define 
        \begin{align}\label{eq_Lipschitzconstant}
            K:= \frac{2}{\varepsilon}\left(\|\DI\| + \sqrt{N_I N_O} \|\DO\|\right)^2 \|\pI\|^2.
        \end{align}
        The algorithm \eqref{eq_FPI} with any starting point $g^0\in \Ccal_{\epsilon}$ and any $\gamma \in (0, 2/K)$ converges to a fixed point of $T$, that is,
        \begin{align}\label{eq_convergence}
            \lim_{t\rightarrow \infty} g^t = g^*
        \end{align}
        for some $g^* \in \Fix T$. Moreover, $g^*$ is such that $p_O(g^*) = p_O^D$. 
    \end{theorem}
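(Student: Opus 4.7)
The plan is to recognize the fixed-point iteration \eqref{eq_FPI} as projected gradient descent on the explicit convex function $Q$ of \eqref{eq_learningcost_step1} itself, and then invoke Lemma~\ref{thm_composition_averagedfunc} together with Theorem~\ref{thm_Krasnoselskii-Mann}. The first step is to show that, despite the Jacobian terms appearing in the naive expression for $\nabla Q$ in Section~\ref{sec:algorithm}, we in fact have $\nabla Q(g) = h(g)$. The reason is the envelope theorem: by Remark~\ref{rem_min_total_power}, $v(g)$ minimizes the total power $v^\T G v$ over all $v = D^\T(p_I,p_O)^\T$ with $p_I$ fixed, so the full $g$-derivative of $g \mapsto v(g)^\T G v(g)$ coincides with the partial derivative holding $v$ fixed, which equals $v(g)^2$; equivalently, the Jacobian contributions cancel because $D_O G v(g) = j_O = 0$. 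Combined with $\nabla_g \bigl[(v^D)^\T G v^D\bigr] = (v^D)^2$ this yields $\nabla Q(g) = h(g)$ and hence $T(g) = P_{\Ccal_\epsilon}\bigl(g - \gamma \nabla Q(g)\bigr)$.

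The same envelope representation also makes convexity transparent: the map $g \mapsto (D^\T(p_I,p_O)^\T)^\T G (D^\T(p_I,p_O)^\T)$ is affine in $g$ for each fixed $p_O$, so $S^*(g) := v(g)^\T G v(g)$ is a pointwise infimum of affine functions and hence concave on $\Ccal_\epsilon$; consequently $Q(g) = \sum_k g_k (v^D_k)^2 - S^*(g)$ is convex. The main technical step is to establish the Lipschitz constant \eqref{eq_Lipschitzconstant} for $h = \nabla Q$. I would start from the factorization
\begin{align*}
h(g_1) - h(g_2) = \bigl(v(g_2) - v(g_1)\bigr) \odot \bigl(v(g_2) + v(g_1)\bigr),
\end{align*}
and bound (i) $\|v(g)\|$ uniformly on $\Ccal_\epsilon$ and (ii) the Lipschitz constant of $g \mapsto v(g)$ on $\Ccal_\epsilon$, in both cases by using $g \geq \epsilon \mathds{1}$ and the spectral lower bound $\lambda_{\min}(D_O G D_O^\T) \geq \epsilon \lambda_{\min}(D_O D_O^\T)$ applied to the closed-form expression \eqref{eq_minimisingv}; the $\sqrt{N_I N_O}$ factor in $K$ should appear when converting an element-wise $\ell_\infty$ estimate into a Euclidean-norm bound. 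I expect this Lipschitz estimate to be the main obstacle of the proof.

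With convexity and the Lipschitz constant in hand, the convergence argument is routine. The projection $P_{\Ccal_\epsilon}$ onto a closed convex set is $1/2$-averaged, and by the Baillon--Haddad theorem the gradient step $g \mapsto g - \gamma \nabla Q(g)$ is $\gamma K/2$-averaged for any $\gamma \in (0, 2/K)$; Lemma~\ref{thm_composition_averagedfunc} then shows that $T$ is $\theta$-averaged for some $\theta \in (0,1)$. Non-emptiness of $\Fix T$ follows from the blanket assumption: for $g^D$ satisfying $p_O(g^D) = p_O^D$ we have $v(g^D) = v^D$, so $h(g^D) = 0$ and $T(g^D) = P_{\Ccal_\epsilon}(g^D) = g^D$. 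Theorem~\ref{thm_Krasnoselskii-Mann} then delivers $g^t \to g^* \in \Fix T$.

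Finally, to identify $g^*$, I note that any $g^* \in \Fix T$ is a minimizer of the convex $Q$ over $\Ccal_\epsilon$ by the standard optimality characterization of projected-gradient fixed points. Since $Q \geq 0$ on $\Ccal_\epsilon$ (Remark~\ref{rem_min_total_power}) and $Q(g^D) = 0$, we get $Q(g^*) = 0$, so $v^D$ attains the minimum of $v \mapsto v^\T G^* v$ over the affine set $\{D^\T(p_I,p_O)^\T : p_O \in \R^{N_O}\}$. Because $D_O G^* D_O^\T$ is invertible for a connected $\Gcal$ with $g^* \in \Ccal_\epsilon$, the $p_O$ attaining this minimum is unique; since $v^D$ corresponds to $p_O = p_O^D$ and $v(g^*)$ corresponds to $p_O = p_O(g^*)$, we conclude $p_O^D = p_O(g^*)$ as claimed.
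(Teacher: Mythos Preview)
Your proposal is correct and takes a genuinely different, and in several respects cleaner, route than the paper.

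The key divergence is your identification $h = \nabla Q$. The paper writes out the chain rule for $\nabla Q$ in Section~\ref{sec:algorithm}, observes that the Jacobian $\nabla v(g)$ is non-diagonal, and therefore abandons $Q$; it then invokes Poincar\'e's lemma on the symmetric Jacobian $J_e$ (Lemma~\ref{lemma_propertiesJacobian}) to produce an abstract convex potential $H$ with $h = \nabla H$ (Lemma~\ref{lem_existence_potential}), and explicitly remarks that $H$ is ``in general unknown'' and distinct from $Q$. Your envelope-theorem argument shows that the extra chain-rule terms in $\nabla Q$ in fact vanish, because $D_O G v(g) = j_O = 0$; hence $H = Q$ up to an additive constant. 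This is a real simplification the paper appears to have missed. It also gives you convexity for free (as a pointwise infimum of affine functions), whereas the paper obtains convexity only after computing $J_e$ and checking positive semidefiniteness. Likewise, your endgame is shorter: from $Q \geq 0$ and $Q(g^D) = 0$ you get $Q(g^*) = 0$, and uniqueness of the power-minimising $p_O$ (invertibility of $D_O G^* D_O^\T$) gives $p_O(g^*) = p_O^D$ directly. The paper instead proves a separate Lemma~\ref{lemma_equalitypotentials}, arguing first that any minimiser of $H$ has $h = 0$ (via a line-segment argument to the interior), and then ruling out the sign ambiguity $v_k(g^*) = -v_k^D$ with an intermediate-value-theorem argument.

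Where your proposal is less detailed is the Lipschitz constant, which you correctly flag as the main obstacle. The paper establishes the stated $K$ by bounding $\|J(g)\|$ via $\lambda_{\max}(W(g)) \leq 1/\epsilon$ (using that $D_O^\T(D_O D_O^\T)^{-1}D_O$ is an orthogonal projection, so $\lambda_{\min}(D_O D_O^\T)$ cancels and does not appear in $K$) together with $\|v(g)\| \leq (\|D_I\| + \sqrt{N_I N_O}\|D_O\|)\|p_I\|$, the latter coming from row-stochasticity of $-(\DOGDO)^{-1}\DO G \DI^\T$ (Lemma~\ref{lemma_propertiesneededtoderivenorm}). Your factorisation $h(g_1)-h(g_2) = (v(g_2)-v(g_1))\odot(v(g_2)+v(g_1))$ will ultimately require the same two ingredients, so to recover the exact constant \eqref{eq_Lipschitzconstant} you would likely end up reproducing the paper's Lemmas~\ref{lemma_propertiesneededtoderivenorm}--\ref{lemma_lipschitz_constant} rather than the $\lambda_{\min}(D_O G D_O^\T) \geq \epsilon\,\lambda_{\min}(D_O D_O^\T)$ route you sketch.
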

\vspace{3mm}

We will prove the above result by showing that $T$ is an averaged function and, hence, that \eqref{eq_FPI} defines a Krasnosel'ski\u{\i}-Mann iteration. Its convergence then follows by Theorem~\ref{thm_Krasnoselskii-Mann}. To show that $T$ is averaged, we note that $T$ is the composition of the functions
\begin{align}\label{eq_func_tildeT}
    \tilde{T}(g) := g - \gamma \blue{\nabla Q(g)}
\end{align}
and $P_{\mathcal{C}_\epsilon}$ as defined in \eqref{eq_projection}. Hence, if we can show that both $\tilde{T}$ and $P_{\Ccal_{\epsilon}}$ are averaged, then by Theorem~\ref{thm_composition_averagedfunc} the function $T$ is averaged as well. Moreover, after showing the convergence of \eqref{eq_FPI}, we will use results from convex optimization to show that the algorithm converges to a point satisfying Objective~\ref{objective}. 

We observe that $\tilde{T}$ has the form of one step of gradient descent.  If it can
be shown that \blue{$Q$ is convex and $\nabla Q$} is Lipschitz continuous, then, in a similar manner to
the proof of convergence of gradient descent in \cite[Section~2.4.3]{Ryu2022}, it can be proven that $\tilde{T}$ is averaged.   We will
prove these properties by showing that the \blue{Hessian $\nabla^2Q$} is 
positive semidefinite and bounded on $\Ccal_{\epsilon}$. 
The \blue{Hessian $\nabla^2Q:(0, \infty)^B \rightarrow \R^{B\times B}$} can be computed as 
\blue{\begin{align}\label{eq_jacobian}
    \nabla^2 Q(g) := \begin{bmatrix}
        \frac{\partial \nabla Q}{\partial g_1}(g) &\frac{\partial \nabla Q}{\partial g_2}(g) &
        \ldots & \frac{\partial \nabla Q}{\partial g_B}(g) 
    \end{bmatrix}.
\end{align}}
We are now able to derive the following result. 


\begin{lemma}\label{lemma_propertiesJacobian}
    \blue{The cost function $Q$ is convex and its Hessian is given by}
    \begin{align}\label{eq_finalJacobian}
        \blue{\nabla^2 Q}(g) = 2\diag \bigl(v(g)\bigr) W(g) \diag \bigl(v(g)\bigr)
    \end{align}
    for any $g\in (0, \infty)^B$, where $W$ is defined as 
    \begin{align} \label{eq_W}
        W(g) := \DO^\T (\DOGDO)^{-1} \DO. 
    \end{align}
\end{lemma}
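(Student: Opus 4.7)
The plan is to compute the Jacobian directly via the chain rule on $h_e(g) = (v^D)^2 - v(g)^2$, since only the second term depends on $g$. Using the Hadamard-product notation $v^2 = v \odot v$, componentwise differentiation gives $J_e(g) = -2\diag(v(g))\,\nabla v(g)$, so the whole task reduces to computing $\nabla v(g)$ in closed form. From \eqref{eq_minimisingv} we have $v(g) = \DI^\T \pI + \DO^\T \pO(g)$, and since the first summand is constant in $g$, $\nabla v(g) = \DO^\T \nabla \pO(g)$.

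The core calculation is $\nabla \pO(g)$. Rather than differentiating the explicit inverse in \eqref{eq_rela_pIpO}, I would use implicit differentiation of the identity
\begin{align*}
\bigl(\DOGDO\bigr)\pO(g) \;=\; -\DO G\DI^\T \pI,
\end{align*}
which follows from the bottom block of \eqref{eq_fullnetworkdescription}. Writing $\DOGDO = \sum_k g_k\, d_{O,k} d_{O,k}^\T$ and $\DO G\DI^\T = \sum_k g_k\, d_{O,k} d_{I,k}^\T$ in terms of the columns $d_{O,k}$ and $d_{I,k}$ of $\DO$ and $\DI$, differentiating in $g_k$ and using $v_k(g) = d_{I,k}^\T \pI + d_{O,k}^\T \pO(g)$ gives
\begin{align*}
\bigl(\DOGDO\bigr)\,\frac{\partial \pO}{\partial g_k}
 \;=\; -d_{O,k}\, v_k(g),
\end{align*}
so $\frac{\partial \pO}{\partial g_k}(g) = -v_k(g)\,(\DOGDO)^{-1} d_{O,k}$. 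Assembling the columns gives $\nabla \pO(g) = -(\DOGDO)^{-1}\DO \diag(v(g))$, hence $\nabla v(g) = -W(g)\diag(v(g))$ with $W(g)$ as in \eqref{eq_W}, and substituting into the chain-rule expression yields the claimed formula \eqref{eq_finalJacobian}.

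Symmetry is then immediate from the form $\diag(v)\,W\,\diag(v)$ together with $W(g)^\T = W(g)$, which in turn follows because $\DOGDO$ is symmetric. For positive semidefiniteness it suffices to check that $W(g)$ is positive semidefinite: on $(0,\infty)^B$, connectedness of $\Gcal$ ensures $\DOGDO$ is positive definite (as already used in the paragraph after \eqref{eq_rela_pIpO_withoutdependence}), so $(\DOGDO)^{-1}$ is positive definite and $W(g) = \DO^\T(\DOGDO)^{-1}\DO$ is positive semidefinite, whence $x^\T J_e(g) x = 2\,(\diag(v(g))x)^\T W(g)(\diag(v(g))x) \geq 0$ for all $x$.

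The only slightly delicate step is the implicit differentiation; a direct attempt to differentiate the product $(\DOGDO)^{-1}\DO G \DI^\T \pI$ using the formula $\partial(A^{-1}) = -A^{-1}(\partial A)A^{-1}$ works but produces two terms that only combine cleanly once one recognizes the bracketed quantity as $v_k(g)$. Using the implicit form above sidesteps this and makes the appearance of $v_k(g)$ transparent, which is why I would organize the proof that way.
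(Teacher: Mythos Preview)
Your argument is correct and reaches the same formula \eqref{eq_finalJacobian} as the paper, with the same overall structure (chain rule on $h_e$, compute $\partial v/\partial g_k$, assemble columns, read off symmetry and positive semidefiniteness). The difference is tactical: the paper differentiates the explicit expression \eqref{eq_minimisingv} using $\partial(A^{-1}) = -A^{-1}(\partial A)A^{-1}$, obtains two terms, and then has to recognise that they combine into $-W(g)e_k e_k^\T v(g)$; it subsequently passes through a Hadamard-product identity (Bernstein) to reach the final matrix form. Your implicit differentiation of $(\DOGDO)\pO(g) = -\DO G\DI^\T\pI$ produces $v_k(g)$ directly and lets you assemble $\nabla v(g) = -W(g)\diag(v(g))$ without any Hadamard bookkeeping, so the derivation is shorter and the emergence of $v_k(g)$ is transparent --- exactly the point you make in your final paragraph. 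The positive-semidefiniteness argument is also essentially the same: the paper factorises $W(g)$ through $G^{1/2}$ to write $x^\T J_e(g)x$ as a squared norm, while you simply invoke positive definiteness of $(\DOGDO)^{-1}$; either works.
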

    \begin{proof}
        \blue{The proof can be found in Section~\ref{proof_appendix_propertiesJacobian}.}
    \end{proof}
\vspace{3mm}


   Next, we want to show that the function \blue{$\nabla Q$} is Lipschitz continuous on
   $\Ccal_{\epsilon}$. Since \blue{$\nabla Q$} is differentiable on $\Ccal_{\epsilon}$ it suffices
   to show that the \blue{Hessian $\nabla^2 Q$}, given by \eqref{eq_finalJacobian}, is bounded on $\Ccal_\epsilon$. To prove this, we start by showing that the matrix $-(\DOGDO)^{-1}\DO G \DI^\T$ has the following properties.   
   \vspace{3mm}
   \begin{lemma}\label{lemma_propertiesneededtoderivenorm}
    If $g\in \Ccal_\epsilon$, then the matrix $-(\DOGDO)^{-1}\DO G\DI^\T$ 
    \begin{enumerate}
        \item[a)] has only nonnegative entries, 
        \item[b)] is row-stochastic, i.e., $-(\DOGDO)^{-1}\DO G\DI^\T\mathds{1} = \mathds{1}$, 
        \item[c)] has all its entries between $0$ and $1$.
    \end{enumerate}
    \end{lemma}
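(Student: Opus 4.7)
The plan is to prove (a), (b), and (c) in that order, with (c) being a trivial consequence of (a) and (b). The key observations are that $\DO G \DO^\T$ is a nonsingular M-matrix (hence has a nonnegative inverse), that the Laplacian's off-diagonal blocks are entrywise nonpositive, and that $\mathds{1}$ lies in the kernel of $D^\T$.

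For (a), I would show separately that $(\DO G \DO^\T)^{-1}$ is entrywise nonnegative and that $-\DO G \DI^\T$ is entrywise nonnegative, so the product inherits nonnegativity. The matrix $\DO G \DO^\T$ is a principal submatrix of the weighted Laplacian $L = DGD^\T$; since every column of $D$ contains exactly one $+1$ and one $-1$, the diagonal entries of $\DO G \DO^\T$ are strictly positive and its off-diagonal entries are nonpositive. Because $\Gcal$ is connected and at least one row of $L$ has been deleted (those indexed by $\Vcal_I$), $\DO G \DO^\T$ is a nonsingular M-matrix in the sense of the paper's notation, and the standard fact that the inverse of a nonsingular M-matrix is entrywise nonnegative gives $(\DO G \DO^\T)^{-1} \geq 0$ componentwise. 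Dually, the entries of $\DO G \DI^\T$ are precisely the off-diagonal Laplacian entries between output and input nodes, and are therefore nonpositive; hence $-\DO G \DI^\T \geq 0$ entrywise, and (a) follows.

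For (b), I would use that every column of $D$ sums to zero, i.e.\ $D^\T \mathds{1} = 0$. Partitioning the all-ones vector according to $\Vcal_I$ and $\Vcal_O$, this reads $\DI^\T \mathds{1}_{N_I} + \DO^\T \mathds{1}_{N_O} = 0$, so $\DI^\T \mathds{1}_{N_I} = -\DO^\T \mathds{1}_{N_O}$. Substituting gives
\begin{align*}
    -(\DOGDO)^{-1}\DO G \DI^\T \mathds{1}_{N_I}
    &= (\DOGDO)^{-1} \DO G \DO^\T \mathds{1}_{N_O} \\
    &= \mathds{1}_{N_O},
\end{align*}
which is (b). Part (c) is then immediate: (a) gives nonnegativity of every entry, and (b) says every row sums to $1$, so no entry can exceed $1$.

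The main obstacle I anticipate is justifying the M-matrix step in (a) carefully. The cleanest route is to write $\DO G \DO^\T = sI - B$ with $B$ entrywise nonnegative and $s$ equal to the largest diagonal entry, then combine strict diagonal dominance on rows corresponding to output nodes adjacent to $\Vcal_I$ with weak diagonal dominance elsewhere, together with connectedness of $\Gcal$, to conclude $s \geq \rho(B)$. Nonnegativity of the inverse then follows, for example, via the Neumann series $(\DO G \DO^\T)^{-1} = s^{-1}\sum_{k=0}^\infty (s^{-1}B)^k$ applied entrywise. Once this is in place, parts (b) and (c) are routine.
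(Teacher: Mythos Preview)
Your proposal is correct and follows essentially the same route as the paper: both arguments prove (a) by combining entrywise nonpositivity of the off-diagonal Laplacian block $\DO G \DI^\T$ with entrywise nonnegativity of $(\DOGDO)^{-1}$ via the nonsingular $M$-matrix property, derive (b) from $\mathds{1}\in\ker D^\T$ (the paper phrases this as $L\mathds{1}=0$ and reads off the second block row, which is the same computation), and obtain (c) immediately from (a) and (b). The only cosmetic difference is that the paper cites a reference for the $M$-matrix inverse-nonnegativity fact, whereas you sketch a direct Neumann-series justification; since invertibility of $\DOGDO$ is already established earlier in the paper, either route is fine.
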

    \begin{proof}
        \blue{See Section~\ref{proof_appendix_propertiesnorm} for the proof.}
    \end{proof}
    \vspace{3mm}

    We are now able to prove the following result. 
    \vspace{3mm}
    \begin{lemma}\label{lemma_lipschitz_constant}
    The \blue{gradient $\nabla Q$} is Lipschitz continuous on $\Ccal_\varepsilon$ with constant $K$ given by \eqref{eq_Lipschitzconstant}.
    \end{lemma}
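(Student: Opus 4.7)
My plan is to bound the operator norm of the Jacobian $J_e$ derived in Lemma~\ref{lemma_propertiesJacobian} uniformly on $\Ccal_\epsilon$, and then upgrade this to Lipschitz continuity of $h$ by the mean value inequality. This works because $\Ccal_\epsilon$ is convex and contained in $(0,\infty)^B$, on which $h_e$ is $C^1$. Since $J_e(g) = 2\diag(v(g))W(g)\diag(v(g))$, submultiplicativity gives
\begin{align*}
    \|J_e(g)\| \leq 2\,\|\diag(v(g))\|^2\,\|W(g)\| = 2\,\|v(g)\|_\infty^2\,\|W(g)\| \leq 2\,\|v(g)\|^2\,\|W(g)\|,
\end{align*}
so it suffices to bound $\|W(g)\|$ and $\|v(g)\|$ on $\Ccal_\epsilon$.

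For $W(g)$: if $g\in\Ccal_\epsilon$ then $G \succeq \epsilon I$, hence $\DOGDO \succeq \epsilon\,\DO\DO^\T$. The matrix $\DO\DO^\T$ is invertible, by connectedness of $\Gcal$ (the same argument used for $\DOGDO$ in the paper, specialized to $G=I$). Monotonicity of matrix inversion on positive definite matrices then yields $(\DOGDO)^{-1} \preceq \tfrac{1}{\epsilon}(\DO\DO^\T)^{-1}$, and conjugating by $\DO^\T$ and $\DO$ preserves the order, giving
\begin{align*}
    W(g) \preceq \tfrac{1}{\epsilon}\,\DO^\T(\DO\DO^\T)^{-1}\DO.
\end{align*}
The right-hand side is $1/\epsilon$ times an orthogonal projector, so $\|W(g)\| \leq 1/\epsilon$.

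For $v(g)$: from $v(g) = \DI^\T \pI + \DO^\T \pO(g)$ and the triangle inequality, $\|v(g)\| \leq \|\DI\|\,\|\pI\| + \|\DO\|\,\|\pO(g)\|$. To bound $\|\pO(g)\|$, set $M = -(\DOGDO)^{-1}\DO G\DI^\T \in \R^{N_O\times N_I}$, so $\pO(g) = M\pI$. By Lemma~\ref{lemma_propertiesneededtoderivenorm}, every entry of $M$ lies in $[0,1]$, hence $\|M\|_F^2 = \sum_{i,j} M_{ij}^2 \leq N_I N_O$, so $\|M\| \leq \|M\|_F \leq \sqrt{N_I N_O}$ and $\|\pO(g)\| \leq \sqrt{N_I N_O}\,\|\pI\|$. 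Combining,
\begin{align*}
    \|v(g)\| \leq \bigl(\|\DI\| + \sqrt{N_I N_O}\,\|\DO\|\bigr)\|\pI\|.
\end{align*}
Plugging the two bounds into the estimate of $\|J_e(g)\|$ gives $\|J_e(g)\| \leq K$ for all $g\in\Ccal_\epsilon$. Finally, the mean value inequality on the convex set $\Ccal_\epsilon$ yields $\|h(g_1) - h(g_2)\| \leq \sup_{g\in[g_1,g_2]} \|J_e(g)\|\,\|g_1-g_2\| \leq K\,\|g_1-g_2\|$, as required.

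The main subtlety I expect is choosing a bound on $\|M\|$ that produces exactly the factor $\sqrt{N_I N_O}$ in the theorem: the sharper row-stochasticity argument actually yields the better bound $\sqrt{N_O}$, but the weaker Frobenius estimate via entries in $[0,1]$ gives the stated constant and is the natural one to use. The other small care point is ensuring that $\DO\DO^\T$ is invertible in order to justify the monotone-inverse step for $W(g)$; this is immediate from connectedness of $\Gcal$, but worth stating explicitly.
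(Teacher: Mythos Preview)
Your proposal is correct and follows essentially the same route as the paper: bound $\|J_e(g)\|$ on $\Ccal_\epsilon$ via the factorization $2\diag(v(g))W(g)\diag(v(g))$, control $\|W(g)\|$ by the monotone-inverse/projector argument giving $1/\epsilon$, and control $\|v(g)\|$ via the triangle inequality together with Lemma~\ref{lemma_propertiesneededtoderivenorm}. The only cosmetic difference is how the bound $\|M\|\le\sqrt{N_IN_O}$ is obtained: the paper bounds each column norm by $\sqrt{N_O}$ and then passes through the $1$-norm to pick up $\sqrt{N_I}$, whereas you go straight through the Frobenius norm; both yield the same constant, and your remark that row-stochasticity would sharpen it to $\sqrt{N_O}$ is correct but not what the paper states.
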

    \vspace{3mm}
    
    \begin{proof}
        We will show that \blue{$\nabla Q$} is Lipschitz continuous with constant $K$ by proving that the Hessian \eqref{eq_jacobian} is bounded with constant $K$ on $\Ccal_{\epsilon}$, i.e., \blue{$\|\nabla^2Q(g)\| \leq K$} for all $g\in \Ccal_{\epsilon}$, see \cite[Lemma~3.1]{khalil2014}.

        By Lemma~\ref{lemma_propertiesJacobian} we have that 
        \blue{$\nabla^2 Q$} is positive semidefinite and given by
        \eqref{eq_finalJacobian}. Hence, \blue{$\|\nabla^2 Q(g)\|= \lambda_{\max} (\nabla^2 Q(g))$} and it follows by the Courant-Fischer Theorem \cite[Theorem~4.2.6]{Horn1991} that we can write
        \begin{align}\label{eq_normJacobian}
            \|\blue{\nabla^2 Q (g)}\|  = \max_{x\in \R^B, \, \|x\| = 1} x^\T \blue{\nabla^2 Q(g)} x. 
        \end{align}
        Let $x \in \R^B$ be such that $\|x\| = 1$, then establishing a bound on $x^\T \blue{\nabla^2 Q(g)} x$ will lead to a bound on $\|\blue{\nabla^2 Q(g)}\|$. Substitution of \eqref{eq_finalJacobian} in \blue{$x^\T \nabla^2 Q(g) x$} gives
        \begin{align*}
            x^\T \blue{\nabla^2 Q(g)} x = 2 x^\T \diag \bigl(v(g)\bigr) W(g) \diag \bigl(v(g)\bigr)x
        \end{align*}
        which, by using that $W$ is symmetric and hence diagonalizable, can be bounded as 
        \begin{align}\label{eq_firstboundJacobian}
            x^\T \blue{\nabla^2 Q(g)} x \leq 2 \lambda_{\max}\bigl(W(g)\bigr) \|\diag \bigl(v(g)\bigr) x\|^2.
        \end{align}
        We are left to show that both $\lambda_{\max}\bigl(W(g)\bigr)$ and $\|\diag \bigl(v(g)\bigr) x\|$ are bounded for all $g \in \Ccal_{\epsilon}$.

        Firstly, we will derive a bound on $\lambda_{\max}\bigl(W(g)\bigr)$. We note that, since $g\in \Ccal_{\epsilon}$, we can write
        \begin{align*}
            \DOGDO \geq \epsilon \DO \DO^\T. 
        \end{align*}
        Then, since both $\DOGDO$ and $\DO \DO^\T$ are symmetric and positive definite, we can apply \cite[Proposition~8.6.6]{Bernstein2008} to obtain
        \begin{align*}
            (\DOGDO)^{-1} \leq \frac{1}{\epsilon} (\DO \DO^\T)^{-1}. 
        \end{align*}
        It follows that $W$ can be bounded as 
        \begin{align*}
            W(g) \leq \frac{1}{\epsilon} \DO^\T (\DO \DO^\T)^{-1}\DO.
        \end{align*}
        implying that 
        \begin{align*}
            \lambda_{\max}\bigl(W(g)\bigr) \leq \frac{1}{\epsilon} \lambda_{\max}\left(\DO^\T (\DO \DO^\T)^{-1}\DO\right). 
        \end{align*}
        We observe that the matrix $\DO^\T (\DO \DO^\T)^{-1}\DO$ is an orthogonal projection matrix, hence its eigenvalues are either $0$ or $1$. It follows that
        \begin{align*}
            \lambda_{\max}\left(\DO^\T (\DO \DO^\T)^{-1}\DO\right) = 1,
        \end{align*}
        hence $\lambda_{\max}\bigl(W(g)\bigr)$ is bounded as 
        \begin{align}\label{eq_boundeigW}
            \lambda_{\max}\bigl(W(g)\bigr) \leq \frac{1}{\epsilon}.
        \end{align}
        for all $g\in \Ccal_\epsilon$. 

        Secondly, we obtain a bound on $\|\diag \bigl(v(g)\bigr) x\|$. We start by noticing that this norm can be bounded as 
        \begin{align*}
            \|\diag \bigl(v(g)\bigr) x\| \leq \|\diag \bigl(v(g)\bigr) \| \| x\|\leq \|\diag \bigl(v(g)\bigr) \|,
        \end{align*}
        where the last inequality follows from $\|x\|=1$. Then, since the absolute values of the diagonal entries in a diagonal matrix equal its singular values, we obtain
        \begin{align*}
            \|\diag \bigl(v(g)\bigr) \|  = \max_{1\leq r\leq B} |v_r(g)|
        \end{align*}
        which, by definition of the Euclidean norm, is bounded from above as 
        \begin{align*}
            \|\diag \bigl(v(g)\bigr) \|  \leq \|v(g)\|.
        \end{align*}
        Moreover, $\|v(g)\|$ can be bounded,  using \eqref{eq_vnew} and the triangle inequality, as 
        \begin{align*}
            \|v(g)\| \leq \bigl\|\DI^\T p_I \bigr\| + \bigl\|-\DO^\T (\DOGDO)^{-1} \DO G \DI^\T p_I \bigr\|
        \end{align*}
        which can be bounded by the product of matrix and vector norms as
        \begin{align}\label{eq_boundv(g)}
        \begin{aligned}
            &\|v(g)\| \leq \\
            &\hspace{3mm} \Bigl(\bigl\|\DI^\T\bigr\| + \bigl\|\DO^\T\bigr\| \bigl\|-(\DOGDO)^{-1} \DO G \DI^\T \bigr\| \Bigr)\| p_I \|.
        \end{aligned}
        \end{align}
        Then, by using Lemma~\ref{lemma_propertiesneededtoderivenorm} we will derive an upper bound on $\bigl\|-(\DOGDO)^{-1} \DO G \DI^\T \bigr\|$. By definition, this norm can be expressed as 
        \begin{align*}
            &\left\Vert-(\DOGDO)^{-1}\DO G\DI^\T\right\Vert = \\
            &\qquad \max_{x \in \R^{N_I},\|x\|= 1} \left\Vert-(\DOGDO)^{-1}\DO G\DI^\T x\right\Vert, 
        \end{align*}
        which equals
        \begin{align*}
            \max_{x \in \R^{N_I},\|x\|= 1} \left\Vert \sum_{k=1}^{N_I}-(\DOGDO)^{-1}\DO G\DI^\T e_k x_k\right\Vert. 
        \end{align*}
        Consecutively applying the triangle inequality and using the homogeneity of the norm leads to 
        \begin{align}\label{eq_boundnormstep}
            \begin{aligned}
            &\left\Vert-(\DOGDO)^{-1}\DO G\DI^\T\right\Vert \leq \\
            &\, \max_{x \in \R^{N_I},\|x\|= 1} \sum_{k=1}^{N_I} |x_k| \left\Vert -(\DOGDO)^{-1}\DO G\DI^\T e_k\right\Vert. 
            \end{aligned}
        \end{align}
        Lemma~\ref{lemma_propertiesneededtoderivenorm} implies that the matrix $-(\DOGDO)^{-1}\DO G\DI^\T$ has all its entries between $0$ and $1$, hence
        \begin{align*}
            \left\Vert -(\DOGDO)^{-1}\DO G\DI^\T e_k\right\Vert \leq \sqrt{N_O}.
        \end{align*}
        Moreover, the Euclidean and $1$-norm are related as 
        \begin{align*}
            \max_{x \in \R^{N_I},\|x\|= 1} \sum_{k=1}^{N_I} |x_k| \leq \max_{x \in \R^{N_I}, \|x\|= 1} \sqrt{N_I}\|x\| = \sqrt{N_I}. 
        \end{align*}
        Then, by substitution of the above inequalities in \eqref{eq_boundnormstep}, we obtain
        \begin{align}\label{eq_boundspecialmatrix}
            \left\Vert-(\DOGDO)^{-1}\DO G\DI^\T\right\Vert \leq \sqrt{N_I N_O}.
        \end{align}
        Finally, combining \eqref{eq_firstboundJacobian}, \eqref{eq_boundeigW}, \eqref{eq_boundv(g)}, and \eqref{eq_boundspecialmatrix} leads to  
        \begin{align*}
            x^\T \blue{\nabla^2 Q(g)} x &\leq \frac{2}{\epsilon}\left(\|\DI^\T\| + \sqrt{N_I N_O} \|\DO^\T\|\right)^2 \|\pI\|^2 \\
            &= \frac{2}{\epsilon}\left(\|\DI\| + \sqrt{N_I N_O} \|\DO\|\right)^2 \|\pI\|^2
        \end{align*}
        for all $x\in \R^B$ with $\|x \| = 1$, hence by \eqref{eq_normJacobian} we conclude that $h$ is Lipschitz continuous with constant \eqref{eq_Lipschitzconstant}.
    \end{proof}
    \vspace{3mm}

    To show that the Contrastive Learning algorithm satisfies Objective~\ref{objective}, we are left to prove that minimisers $g\in \Ccal_{\epsilon}$ of \blue{the cost function $Q$} are such that the vector of output potentials $p_O(g)$ equals the vector of desired output potentials $p_O^D$. Here, we define a minimiser of \blue{$Q$} as a point $g\in \Ccal_{\epsilon}$ satisfying \blue{$Q(g)= Q^*$} where $\blue{Q^*} = \min_{\bar{g}\in \Ccal_{\epsilon}} \blue{Q}(\bar{g})$. We can now derive the following result. 

    \vspace{3mm}
     \begin{lemma}\label{lemma_equalitypotentials}
        Any minimiser $g\in \Ccal_{\epsilon}$ of \blue{$Q$} satisfies $p_O(g) = p_O^D$.
    \end{lemma}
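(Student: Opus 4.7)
The plan is to show that the potential function $H$ from Lemma~\ref{lem_existence_potential} coincides with the cost function $Q$ from \eqref{eq_learningcost_step1} up to an additive constant on the connected set $\Ccal_\epsilon$. Once this is established, the conclusion is immediate: the minimisers of $H$ on $\Ccal_\epsilon$ are exactly the minimisers of $Q$ on $\Ccal_\epsilon$, we have $Q(g) \geq 0$ with equality precisely when $p_O(g) = p_O^D$, and by the standing assumption \eqref{eq_rela_pIpO_desired} the vector $g^D \in \Ccal_\epsilon$ achieves $Q(g^D) = 0$ (since $v(g^D) = v^D$). Therefore the minimum value of $Q$ on $\Ccal_\epsilon$ equals $0$, and any minimiser $g$ of $H$ satisfies $Q(g) = 0$, i.e., $p_O(g) = p_O^D$.

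To prove $\nabla Q = \nabla H = h$, I would differentiate the two pieces of $Q(g) = (v^D)^\T G v^D - v(g)^\T G v(g)$ separately. The first piece is linear in $g$ with componentwise gradient $(v^D)^2$. For the second piece, I would invoke Remark~\ref{rem_min_total_power}: $v(g)$ minimises the total power $S(p_I, p_O) = (D_I^\T p_I + D_O^\T p_O)^\T G (D_I^\T p_I + D_O^\T p_O)$ over $p_O \in \R^{N_O}$. By the envelope theorem, the total derivative of this minimum value with respect to $g_k$ equals the partial derivative of $S$ with $p_O$ held fixed, which is $v_k(g)^2$. Equivalently, a direct chain-rule computation produces cross terms $2 v(g)^\T G\, \partial v/\partial g_k$ that vanish because $\partial v/\partial g_k = D_O^\T \partial p_O/\partial g_k$ and the first-order optimality of $p_O(g)$ gives $D_O G v(g) = 0$. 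Combining both contributions yields $\nabla Q(g) = (v^D)^2 - v(g)^2 = h(g)$.

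Since $\Ccal_\epsilon$ is connected and $\nabla(H - Q) = 0$ on $\Ccal_\epsilon$, the difference $H - Q$ is constant, so $H$ and $Q$ have identical minimisers and the argument closes as above. The main step is the envelope-theorem computation, but because the underlying minimisation is an unconstrained strongly convex quadratic in $p_O$, the necessary cancellation $D_O G v(g) = 0$ is immediate from \eqref{eq_fullnetworkdescription}, and the remainder of the proof is a short chain of already-noted facts.
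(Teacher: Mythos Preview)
Your argument is correct and takes a genuinely different route from the paper.  The paper never identifies $H$ explicitly; instead it argues abstractly from the first-order optimality condition $h(g)^\T(\bar g-g)\geq 0$, handles boundary minimisers by taking convex combinations with an interior $g^D$ and using continuity of $h$, and then resolves the sign ambiguity $v(g)^2=(v^D)^2$ via the intermediate value theorem along the segment $\alpha g+(1-\alpha)g^D$.  Your proof is shorter and more informative: by exploiting the stationarity $D_O G v(g)=0$ (equivalently, the envelope theorem for the strictly convex quadratic minimisation in $p_O$), you show that the cross terms in $\nabla Q$ vanish, hence $\nabla Q=h$ and $H=Q+\text{const}$.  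The conclusion then drops out of the strict convexity of $S(p_I,\cdot)$ and the standing assumption on $g^D$, with no need for the boundary/IVT analysis.

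It is worth noting that your computation actually sharpens the paper's narrative: the discussion after \eqref{eq_gradiendescent} and the remark following Lemma~\ref{lem_existence_potential} suggest that the potential $H$ is ``in general unknown'' and distinct from $Q$, because the Jacobian $\nabla v(g)$ is not diagonal.  Your observation shows that although $\nabla v(g)$ is indeed non-diagonal, the contraction $v(g)^\T G\,\nabla v(g)$ vanishes identically, so $\nabla Q=h$ exactly and Contrastive Learning is projected gradient descent on $Q$ itself.
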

    \vspace{3mm}
    
    \begin{proof}
    We will show this result by using two consecutive steps. First, we will show that every minimiser $g\in \Ccal_{\epsilon}$ of \blue{$Q$} is such that \blue{$\nabla Q(g)=0$}. Second, we prove that this implies that $p_O(g) = p_O^D$. 

    We start by recalling that $\Ccal_{\epsilon}$ is a convex set and that by Lemma~\ref{lemma_propertiesJacobian} \blue{the cost function $Q$} is convex. It is then a standard result in convex optimization, see \cite[Section~3.1.3]{Boyd2004}, that a point $g\in \Ccal_{\epsilon}$ is a minimiser of \blue{$Q$} if and only if 
    \begin{align}\label{eq_minimiser}
        \blue{\nabla Q}(g)^\T (\bar{g}-g)\geq 0 \text{ for all } \bar{g}\in \Ccal_{\epsilon}.
    \end{align}
    Next, we recall that by assumption~\ref{as_existence} there exists $g^D \in \Ccal_{\epsilon}$ such that $\blue{\nabla Q}(g^D)=0$. Since $\blue{\nabla Q}(g^D)=0$, it follows from Lemma~\ref{lem_Movellan} and \eqref{eq_minimisingv} that $\blue{\nabla Q}(\beta g^D)=0$ for any $\beta>1$. Therefore, we can assume without loss of generality that $g^D\in \text{int } \Ccal_{\epsilon}$, where $\text{int } \Ccal_{\epsilon}$ is the interior of $\Ccal_{\epsilon}$, i.e.,
    \begin{align*}
        \text{int } \Ccal_{\epsilon} := \{ x\in \R^B \, | \, x_k> \epsilon, k= 1,2, \ldots, B\}. 
    \end{align*}
    Obviously, $g^D\in \text{int } \Ccal_{\epsilon}$ satisfies \eqref{eq_minimiser} and is thus a minimiser of $\blue{ Q}$. 

    We assume that $g\in \Ccal_{\epsilon}$ is another minimiser of $\blue{Q}$. We notice that if $g\in \text{int }\Ccal_{\epsilon}$, then $\blue{\nabla Q}(g) = 0$. Namely, in the case that $\blue{\nabla Q}(g)\neq 0$, we can pick $\bar{g} = g- \alpha \blue{\nabla Q}(g)$ with $\alpha>0$ sufficiently small such that $\bar{g}\in \Ccal_{\epsilon}$. With this choice of $\bar{g}\in \Ccal_{\epsilon}$ we derive $\blue{\nabla Q}(g)^\T(\bar{g}-g) = - \alpha \blue{\nabla Q}(g)^\T \blue{\nabla Q}(g) < 0$
    which contradicts \eqref{eq_minimiser}. Next, we suppose that $g\notin \text{int } \Ccal_{\epsilon}$. By the convexity of $\Ccal_{\epsilon}$ and $\blue{Q}$, we have that 
    \begin{align*}
        \alpha g + (1-\alpha)g^D \in \Ccal_{\epsilon}
    \end{align*}
    and
    \begin{align*}
        \blue{Q}(\alpha g + (1-\alpha)g^D) \leq \alpha \blue{Q}(g) + (1-\alpha) \blue{Q}(g^D)
    \end{align*}
    for all $\alpha \in [0,1]$. Moreover, as both $g$ and $g^D$ are minimisers of $\blue{Q}$, i.e., $\blue{Q}(g) = \blue{Q}(g^D) = \blue{Q}^*$ with $\blue{Q}^* = \min_{\bar{g}\in \Ccal_{\epsilon}}\blue{Q}(\bar{g})$, we can write 
    \begin{align*}
        \blue{Q}(\alpha g + (1-\alpha)g^D) \leq \blue{Q}^*.
    \end{align*}
    Hence, $\alpha g + (1-\alpha)g^D \in \Ccal_{\epsilon}$ is a minimiser of $\blue{Q}$ for all $\alpha \in [0,1]$. We observe that by the assumption $g^D \in \text{int }\Ccal_{\epsilon}$ it follows that $\alpha g + (1-\alpha)g^D \in \text{int } \Ccal_{\epsilon}$ for all $\alpha \in [0,1)$. Then, by using the result we have shown before, we obtain that $\blue{\nabla Q}(\alpha g + (1-\alpha)g^D) = 0$ for all $\alpha \in [0,1)$. Finally, by the continuity of $\blue{\nabla Q}$, we conclude that $\blue{\nabla Q}(\alpha g + (1-\alpha)g^D) = 0$ for all $\alpha \in [0,1]$ and hence $\blue{\nabla Q}(g) = 0$ for all minimisers $g\in \Ccal_{\epsilon}$ of $\blue{Q}$. 

    We are left to show that $p_O(g) = p_O^D$ for all minimisers $g\in \Ccal_{\epsilon}$ of $\blue{Q}$. To show this, we again use that $\blue{\nabla Q}(\alpha g + (1-\alpha)g^D) = 0$ for all $\alpha \in [0,1]$ and any minimiser $g\in \Ccal_{\epsilon}$ of $\blue{Q}$. This implies that 
    \begin{align}\label{eq_proofpOg_step1}
        \bigl(v\bigl(\alpha g + (1-\alpha) g^D\bigr)\bigr)^2 = \bigl(v^D\bigr)^2 
    \end{align}
    for all $\alpha \in [0,1]$. Suppose that there exists a branch $k$ such that $v_k(g) = - v_k^D$ with $v_k^D \neq 0$ and introduce $f:[0,1]\rightarrow \R$ defined by 
    \begin{align*}
        f(\alpha) := v_k\bigl(\alpha g + (1-\alpha)g^D\bigr).
    \end{align*}
    We note that $f$ is a continuous function, $f(0) = v_k^D$, and $f(1) = - v_k^D$. Hence, by the intermediate value theorem there exists an $\alpha'\in [0,1]$ such that $f(\alpha')=0$. However, this contradicts \eqref{eq_proofpOg_step1} as $v_k^D$ is assumed to be nonzero. We conclude that $v_k(g) = v_k^D$ for all $k\in \{1, \ldots, B\}$ and hence $v(g) = v^D$. Finally, as the graph $\Gcal$ is assumed to be connected we have that $\kernel D^\T = \image \mathds{1}$, hence it follows by \eqref{eq_minimisingv} and \eqref{eq_desiredv} that every minimiser $g\in \Ccal_{\epsilon}$ of $\blue{Q}$ is such that $p_O(g) = p_O^D$.
    \end{proof}
    \vspace{3mm}
    
    We are now able to prove Theorem~\ref{thm_mainresult}. 
    
    \noindent\hspace{2em}{\itshape Proof of Theorem~\ref{thm_mainresult}:}
        The function $T$ in \eqref{eq_FPI_func} is the composition of the functions $\tilde{T}$ in \eqref{eq_func_tildeT} and $P_{\Ccal_{\epsilon}}$ in \eqref{eq_projection}. Therefore, if both $\tilde{T}$ and $P_{\Ccal_{\epsilon}}$ are averaged, then by Theorem~\ref{thm_composition_averagedfunc} the function $T$ is averaged as well. As $\Ccal_{\epsilon}$ is closed and convex, it follows from \cite[Proposition~4.8]{Bauschke2011} that $P_{\Ccal_{\epsilon}}$ is $\theta_1$-averaged with $\theta_1 = 1/2$ (also called firmly nonexpansive). We note that $\Tilde{T}$ has the form of a gradient descent algorithm, hence averageness of $\tilde{T}$ can be shown by using a similar approach as in \cite[Section~2.4.3]{Ryu2022}. Namely, $\tilde{T}$ can equivalently be written as 
        \begin{align}\label{eq_Ttilde2}
            \tilde{T}(g) = (1-\theta_2) g + \theta_2 \left(g - \frac{2}{K}h(g)\right)
        \end{align}
        with $\theta_2 = \gamma K/2$. Note, that the assumption $\gamma \in (0,2/K)$ implies $\theta_2 \in (0,1)$. Moreover, Lemma~\ref{lemma_propertiesJacobian} and Lemma~\ref{lemma_lipschitz_constant} imply that $\blue{Q}:\Ccal_{\epsilon}\rightarrow\R$ is convex and $\blue{\nabla Q}$ is Lipschitz continuous on $\Ccal_{\epsilon}$ with constant $K$ in \eqref{eq_Lipschitzconstant}. It follows by the Baillon-Haddad theorem \cite[p.~29]{Ryu2022} that $\blue{\nabla Q}$ is $(1/K)$-cocoercive on $\Ccal_{\epsilon}$, i.e., 
        \begin{align}\label{eq_cocoercivity}
            \begin{aligned}
            \bigl(\blue{\nabla Q}(g)-\blue{\nabla Q}(g')\bigr)^\T &\bigl(g-g'\bigr) \\
            & \geq \frac{1}{K} \|\blue{\nabla Q}(g) - \blue{\nabla Q}(g')\|^2
            \end{aligned}
        \end{align}
        for all $g, g'\in \Ccal_{\epsilon}$. Then, by expanding brackets, we obtain  
        \begin{align*}
            &\left\| \Bigl(g - \frac{2}{K}\blue{\nabla Q}(g) \Bigr) - \Bigl(g' - \frac{2}{K}\blue{\nabla Q}(g') \Bigr) \right\|^2 = \|g-g'\|^2 -\\
            &\frac{4}{K}\left( \bigl(\blue{\nabla Q}(g)-\blue{\nabla Q}(g')\bigr)^\T \bigl(g-g'\bigr) - \frac{\|\blue{\nabla Q}(g) - \blue{\nabla Q}(g')\|^2}{K} \right)
        \end{align*}
        which implies by \eqref{eq_cocoercivity} that 
        \begin{align*}
            &\left\| \Bigl(g - \frac{2}{K}\blue{\nabla Q}(g) \Bigr) - \Bigl(g' - \frac{2}{K}\blue{\nabla Q}(g') \Bigr) \right\|^2 \leq \|g-g'\|^2
        \end{align*}
        for all $g, g'\in \Ccal_{\epsilon}$. It follows that $g-(2/K)\blue{\nabla Q}(g)$ is a non-expansive function, hence $\tilde{T}$ in \eqref{eq_Ttilde2} is $\theta_2$-averaged. Finally, Theorem~\ref{thm_composition_averagedfunc} implies that $T$ is $\theta$-averaged with $\theta = (4-\gamma K)/8$ and the result \eqref{eq_convergence} follows from Theorem~\ref{thm_Krasnoselskii-Mann} and \blue{the fact that $\Fix T \neq \emptyset$ by Assumption~\ref{as_existence}}. Moreover, as $g^*$ in \eqref{eq_convergence} defines a minimiser of \blue{$Q$}, we conclude from Lemma~\ref{lemma_equalitypotentials} that $g^* \in \Fix T$ is such that $p_O(g^*)= p_O^D$ and hence Objective~\ref{objective} is satisfied.
        \hspace*{\fill}~\QED\par\endtrivlist\unskip

\blue{The proof of Theorem~\ref{thm_mainresult} relies on Assumption~\ref{as_existence}: there must exist a $g^*$ such that $p_O(g^*) = p_O^D$ for convergence of the algorithm to be guaranteed. This seems restrictive, as it would be reasonable for a circuit to be able to \emph{approximate} a mapping without being able to realise it precisely.  Guaranteeing convergence in the absence of Assumption~\ref{as_existence} is an interesting problem that we leave for future research.}


\section{Stochastic Contrastive Learning}\label{sec:stochastic}

So far, we have treated Objective~\ref{objective}, which is to learn a mapping from a single input vector $p_I$ to a single output vector $p_O^D$.  In practice, however, we will most often have a collection of $n$ input-output pairs, gathered in the matrices 
\begin{equation}
\begin{aligned} 
\mathbf{p}_I &:= \begin{bmatrix} 
p_{I1} & p_{I2} & \cdots & p_{In}
\end{bmatrix} \in \R^{N_I \times n} \\
\mathbf{p}_O^D &:= \begin{bmatrix} 
p_{O1}^D & p_{O2}^D & \cdots & p_{On}^D
\end{bmatrix} \in \R^{N_O \times n}.
\end{aligned} 
\end{equation} 
In this setting, given $g\in \Ccal_\epsilon$, we define 
\begin{equation}
\mathbf{p}_O(g) := \begin{bmatrix} 
p_{O1}(g) & p_{O2}(g) & \cdots & p_{On}(g)
\end{bmatrix},
\end{equation} 
where
$$
p_{O\ell}(g) = - (\DO G \DO^\T)^{-1}\DO G \DI^\T p_{I\ell}
$$
for all $ \ell = 1,2,\dots,n$. Now, our goal is to find a vector of conductances $g^* \in \Ccal_\epsilon$ such that $\mathbf{p}_O(g^*) = \mathbf{p}_O^D$. We will assume this to be possible, i.e., we make the blanket assumption that there exists some $g^D \in \Ccal_\epsilon$ satisfying $\mathbf{p}_O(g^D) = \mathbf{p}_O^D$.
 
In the case that the size $n$ of the data set is small, Contrastive Learning can be extended in a straightforward manner by averaging the cost functions $Q$ over the training data set.  As $n$ becomes large, however, computing the gradient of this aggregate cost becomes expensive.  In this section, we propose an alternative, a stochastic version of Contrastive Learning (see Algorithm~\ref{alg_StochasticContrastiveLearning}), and prove its convergence.

\begin{algorithm}
\caption{Stochastic Contrastive Learning}\label{alg_StochasticContrastiveLearning}
Let $g^0 \in \Ccal_{\epsilon}$ be given. Let $(\gamma_t)_{t \in \mathbb{N}}$ be a nonincreasing sequence of positive stepsizes. Repeat the following steps for each time-step $t = 0, 1, 2, \ldots$. 
\begin{enumerate}
    \item Select $\ell \in \{1,2,\dots,n\}$ uniformly at random.
    \item Determine $v_\ell(g^{t})$ by \eqref{eq_minimisingv} with $p_I = p_{I\ell}$.
    \item Determine $v^D_\ell$ by \eqref{eq_desiredv} with $p_I = p_{I\ell}$ and $p_O^D = p_{O\ell}^D$.
    \item Compute $g^{t+1}$ by
        \begin{align}\label{eq_algorithmSGD}
            g^{t+1} = P_{\Ccal_\epsilon} \Bigl(g^t - \gamma_t \Bigl(\bigl(v_\ell^D\bigr)^2 - \bigl(v_\ell(g^t)\bigr)^2\Bigr)\Bigr).
        \end{align}
\end{enumerate}
\end{algorithm}

Convergence of Algorithm~\ref{alg_StochasticContrastiveLearning} is shown in the following theorem.  We first introduce extended-valued functions, and extend some of the notation of Section~\ref{sec:convergence} to the stochastic setting.  

A function $f: \R^n \to \R\cup\{\pm \infty \}$ is called an extended-valued function.  The (effective) domain of an extended-valued function $f$ is the set $\dom f := \{x \in \R^n \;|\; f(x) < \infty\}$. The range of $f$ is denoted $\operatorname{ran} f$.  An extended-valued function $f$ is called proper if it never takes the value $-\infty$ and is finite somewhere.  The epigraph of an extended-valued function $f$ is the set $\operatorname{epi} f := \{ (x, \alpha) \in
\R^B \times \R \; | \; f(x) \leq \alpha \}$. An extended-valued function is called closed if its epigraph is a closed set.  Convexity of $f$ is equivalent to convexity of its epigraph. The subdifferential of a convex (extended-valued) function $f$ at $x$ is the set
\begin{align*}
    \partial f(x) := \{ z \in \R^n \;|\; f(y) \geq f(x) + z^\T(y - x), &\\ \text{for all } y \in \R^n\}.
\end{align*}

We let $F: \R^n \rightrightarrows \R^m$ denote a (possibly) multi-valued map from $\R^n$ to $\R^m$.  The set of zeros of such a map $F$ is denoted by $\zer(F) := \{ x \; | \; 0 \in F(x) \}$.

Let $v_\ell(g)$ be defined by \eqref{eq_minimisingv} with $p_I = p_{I\ell}$, $v^D_\ell$ be given by \eqref{eq_desiredv} with $p_I = p_{I\ell}$ and $p_O^D = p_{O\ell}^D$, and define $\blue{Q}_\ell: (0, \infty)^B \to \R$ by 
\begin{equation}
    Q_\ell(g) := (v^D_\ell)^\T G v^D_\ell - v_\ell(g)^\T G v_\ell.
\end{equation}
We then define the aggregate function
\begin{align}
    \mathbf{h}(g) &:=    \blue{\frac{1}{n}\sum_{l=1}^n \nabla Q_\ell}\\
    &= \frac{1}{n}\sum_{l=1}^n \bigl(v^D_\ell\bigr)^2 - \bigl(v_\ell(g)\bigr)^2.
\end{align}
We furthermore let $\Ic: \R^B \to \R\cup\{\pm \infty\}$ be the indicator function of the set $\Ccal_\epsilon$, defined as
\begin{equation}
    \Ic(x) := \begin{cases}
        0 & x \in \Ccal_\epsilon\\
        \infty & x \notin \Ccal_\epsilon.
    \end{cases}
\end{equation}
The subdifferential of $\Ic$ is the normal cone operator $\Nc: \R^B \rightrightarrows \R^B$, defined by
\begin{equation}
    \Nc(x) := \begin{cases}
        \emptyset & x \notin \Ccal_\epsilon\\
        \{ y \; | \; y^\T(z - x) \leq 0 \; \forall \; z \in \Ccal_\epsilon\} & x \in \Ccal_\epsilon.
    \end{cases}
\end{equation}
For each $\ell$, we define $K_\ell$ to be the Lipschitz constant of \blue{$\nabla Q_\ell$} given by Lemma~\ref{lemma_lipschitz_constant}, Equation \eqref{eq_Lipschitzconstant} with $p_I = p_{I\ell}$. We let $K_{\max} := \max \{K_1,K_2,\dots,K_n\}$.

\begin{theorem}\label{thm_stochasticresult}
     Suppose the sequence $(\gamma_t)_{t\in \mathbb{N}}$ in $(0,\infty)$ is decreasing and satisfies $\sum_t \gamma_t = \infty$ and $\sum_t \gamma_t^2 < \infty$.  Then, with probability $1$, the iterates of Algorithm~\ref{alg_StochasticContrastiveLearning}, with any starting point $g^0\in \Ccal_{\epsilon}$ satisfy 
      \begin{align}\label{eq_convergenceSR}
          \lim_{t\rightarrow \infty} g^t = g^*
      \end{align}
      for some $g^* \in \zer(\mathbf{h})$. Moreover, $\mathbf{p}_O(g^*) = \mathbf{p}_O^D$.
\end{theorem}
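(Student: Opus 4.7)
The plan is to recast Algorithm~\ref{alg_StochasticContrastiveLearning} as projected stochastic gradient descent for an aggregate convex potential with Lipschitz continuous gradient, apply the Robbins--Siegmund supermartingale convergence theorem to obtain almost-sure convergence of the iterates, and then translate this to the desired conclusion about $\mathbf{p}_O(g^*)$ by extending the argument of Lemma~\ref{lemma_equalitypotentials}.

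First, for each $\ell \in \{1,\dots,n\}$, Lemma~\ref{lem_existence_potential} applied to $h_\ell$ yields a convex potential $H_\ell:\Ccal_\epsilon\to\R$ with $\nabla H_\ell = h_\ell$, while Lemma~\ref{lemma_lipschitz_constant} gives Lipschitz constant $K_\ell \leq K_{\max}$. Define the aggregate $\mathbf{H}(g) := \frac{1}{n}\sum_{\ell=1}^n H_\ell(g)$; then $\mathbf{H}$ is convex with $\nabla \mathbf{H} = \mathbf{h}$ Lipschitz with constant at most $K_{\max}$. Since $h_\ell(g^D) = 0$ for every $\ell$ by assumption, convexity of each $H_\ell$ implies that $g^D$ simultaneously minimizes every $H_\ell$, and hence also minimizes $\mathbf{H}$ over $\Ccal_\epsilon$; call this common minimum value $\mathbf{H}^*$. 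The iteration \eqref{eq_algorithmSGD} can then be written as $g^{t+1} = P_{\Ccal_\epsilon}\bigl(g^t - \gamma_t h_{\ell_t}(g^t)\bigr)$ with $\ell_t$ uniform on $\{1,\dots,n\}$, so that $E[h_{\ell_t}(g^t)\mid g^t] = \mathbf{h}(g^t)$ is an unbiased estimator of $\nabla \mathbf{H}(g^t)$.

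Next, using that $P_{\Ccal_\epsilon}$ is nonexpansive and fixes $g^D$, followed by convexity of $H_{\ell_t}$ via $h_{\ell_t}(g^t)^\T(g^t - g^D) \geq H_{\ell_t}(g^t) - H_{\ell_t}(g^D)$, I would derive the standard stochastic-descent recursion
\begin{align*}
&E\bigl[\|g^{t+1}-g^D\|^2 \mid g^t\bigr] \leq \|g^t-g^D\|^2 \\
&\qquad -2\gamma_t\bigl(\mathbf{H}(g^t)-\mathbf{H}^*\bigr) + \gamma_t^2\, E\bigl[\|h_{\ell_t}(g^t)\|^2 \mid g^t\bigr].
\end{align*}
An a.s.\ boundedness argument for $\{g^t\}$, extracted from this recursion together with $\sum_t \gamma_t^2 < \infty$, bounds the noise term uniformly along the trajectory, so the Robbins--Siegmund lemma yields both almost-sure convergence of $\|g^t - g^D\|^2$ and $\sum_t \gamma_t(\mathbf{H}(g^t) - \mathbf{H}^*) < \infty$. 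Since $\sum_t \gamma_t = \infty$, this forces $\liminf_t \mathbf{H}(g^t) = \mathbf{H}^*$; extracting a convergent subsequence $g^{t_k} \to g^*$ and repeating the Fej\'er recursion with $g^D$ replaced by $g^*$ upgrades this to $g^t \to g^*$ a.s., with $g^*$ a minimizer of $\mathbf{H}$ over $\Ccal_\epsilon$.

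Finally, since $\mathbf{H}(g^*) = \frac{1}{n}\sum_\ell H_\ell(g^*) = \mathbf{H}^* = \frac{1}{n}\sum_\ell H_\ell(g^D)$ while $H_\ell(g^*) \geq H_\ell(g^D)$ for every $\ell$, equality must hold termwise, so $g^*$ minimizes every $H_\ell$ individually. Applying the reasoning of Lemma~\ref{lemma_equalitypotentials} to each $H_\ell$ separately then yields $h_\ell(g^*) = 0$ and $p_{O\ell}(g^*) = p^D_{O\ell}$ for every $\ell$, so $g^* \in \zer(\mathbf{h})$ and $\mathbf{p}_O(g^*) = \mathbf{p}_O^D$. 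The main obstacle I expect is the a.s.\ boundedness step needed to control the stochastic noise term in the Robbins--Siegmund recursion; the assumed common zero $g^D \in \Ccal_\epsilon$ is essential here, both as the Fej\'er anchor that keeps iterates bounded and in upgrading convergence in objective value to almost-sure convergence of the iterates themselves.
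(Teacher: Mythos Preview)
Your proposal is correct and takes a genuinely different route from the paper's proof.

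The paper does not argue via Robbins--Siegmund directly. Instead, it packages Algorithm~\ref{alg_StochasticContrastiveLearning} as a stochastic forward--backward iteration on the operator pair $(\mathbf{h},\Nc)$ and invokes a black-box convergence theorem from \cite{Ryu2022}. To meet that theorem's hypotheses, the paper verifies that $\partial(H+\Ic)$ is \emph{demipositive} and that the single-sample gradients satisfy a quadratic growth bound $\tfrac{1}{n}\sum_\ell\norm{h_\ell(g)}^2\leq C_1\norm{g}^2+C_2$, the latter obtained from Lipschitzness and the common zero $g^D$. Your direct supermartingale argument replaces all of this with the single-line estimate $\norm{h_\ell(g^t)}\leq K_{\max}\norm{g^t-g^D}$, which folds the noise term into the very quantity $\norm{g^t-g^D}^2$ being tracked; Robbins--Siegmund then gives boundedness and summability in one stroke, so the ``obstacle'' you flag is actually already handled by your own recursion. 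The paper's approach is cleaner by citation; yours is more self-contained and makes the role of the common zero $g^D$ more transparent.

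For the final step---showing that a minimizer $g^*$ of $\mathbf{H}$ satisfies $h_\ell(g^*)=0$ for every $\ell$---the paper uses a cocoercivity (Baillon--Haddad) argument: summing $(h_\ell(g^*))^\T(g^*-g^D)\geq K_{\max}^{-1}\norm{h_\ell(g^*)}^2$ over $\ell$ and using $\mathbf{h}(g^*)=0$ forces each term to vanish. Your termwise argument (each $H_\ell$ is minimized at $g^D$, the average is minimized at $g^*$, hence equality holds termwise) is simpler and equally valid; it exploits the same structural fact---that $g^D$ is a \emph{simultaneous} minimizer---but more directly.
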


\begin{proof}
    We begin by showing convergence of the algorithm to an element of $\zer(\mathbf{h} + \Nc)$.  It follows from \cite[$\S$2.5.1]{Ryu2022} that Algorithm~\ref{alg_StochasticContrastiveLearning} is a special case of the stochastic forward-backward iteration \cite[$\S$7.1]{Ryu2022} applied to the operators $\mathbf{h}$ and $\Nc$, and we proceed by applying the convergence result of \cite[Thm. 4]{Ryu2022}.  We first note that $\zer(\mathbf{h} + \Nc) \neq \emptyset$, as $g^D \in \zer(\mathbf{h} + \Nc)$ by assumption.
    It follows from \blue{Lemma~\ref{lemma_propertiesJacobian}} that, for each $\ell \in \{1, 2, \ldots, n\}$, \blue{$Q_\ell$ is convex}.  For each $\ell$, define $\blue{\bar Q_\ell}: \R^B \to \R\cup\{\pm\infty\}$ by
    \begin{IEEEeqnarray*}{rCl}
        \blue{\bar Q_\ell(g)} &=& \begin{cases}
                        \blue{Q_\ell(g)} & g \in \Ccal_\epsilon\\
                        \infty & g \notin \Ccal_\epsilon.
\end{cases}
    \end{IEEEeqnarray*}
By definition, $\operatorname{epi} \blue{\bar Q_\ell} \subseteq \Ccal_\epsilon \times \R$. Moreover, since $\Ccal_\epsilon$ is closed, $\operatorname{epi} \blue{\bar Q_\ell}$ is
closed.  Therefore $\blue{\bar Q_\ell}$ is closed.  Furthermore, differentiability of $\blue{Q_\ell}$
implies that both $\blue{Q_\ell}$ and $\blue{\bar Q_\ell}$ are proper \cite[p. 7]{Ryu2022}. The indicator function $\Ic$ is
closed, convex and proper \cite[p. 8]{Ryu2022}, so the function
    $\blue{Q} + \Ic$ with 
    $\blue{Q} := (1/n)\sum_{\ell = 1}^n \blue{\bar Q_\ell}$ is closed, convex and proper.  Now, for each $\ell$,
    define $\bar h_\ell: \R^B \rightrightarrows \R^B$ by
\begin{IEEEeqnarray*}{rCl}
    \bar h_\ell(g) &:=& \begin{cases}
                \blue{\nabla Q_\ell(g)} & g \in \Ccal_\epsilon\\
                \emptyset & g \notin \Ccal_\epsilon.
\end{cases}
\end{IEEEeqnarray*}
It follows that
\begin{IEEEeqnarray*}{rCl}
    \partial(\blue{Q} + \Ic) = \frac{1}{n}\sum_l \bar h_\ell + \Nc.
\end{IEEEeqnarray*}
We define $f(g) = (\blue{Q} + \Ic)(g)$, and note that $\zer(\partial f) = \zer(\mathbf{h} + \Nc)$.  We now show that $\partial f$ is demipositive, that
is, there exists $g^\ast \in \zer(\partial f)$ such that, for all $g \notin \zer(\partial f)$ and $x \in \partial f(g)$,
\begin{IEEEeqnarray}{rCl}
    x^\T(g - g^\ast) > 0. \label{eq:demipositive}
\end{IEEEeqnarray}
By assumption, there exists $g^\ast \in \zer(\partial f)$.  Furthermore, by the
definition of the subdifferential, for all $g, y \in \R^B$ and $x \in \partial f(g)$, we have
\begin{IEEEeqnarray}{rCl}
    x^\T(g - y) \geq f(g) - f(y). \label{eq:subgradient}
\end{IEEEeqnarray}
In particular, we can let $y = g^\ast \in \zer(\partial f)$,
and $g \notin \zer(\partial f)$.  We claim that, in this case, $f(g) - f(g^\ast) >
0$.  Fermat's rule \cite[Thm. 16.3]{Bauschke2011} states that $g \in \arg\min f \iff
\partial f(g) = 0$.  It follows that $g^\ast \in \arg\min f$, so $f(g) - f(g^\ast)
\geq 0$.  Suppose by contradiction that $f(g) - f(g^\ast) = 0$.  Then $g \in
\arg\min f$, so by Fermat's rule, $\partial f(g) = 0$, contradicting our assumption
that $g \notin \zer(\partial f)$.  Therefore $f(g) - f(g^\ast) > 0$, and we conclude
from \eqref{eq:subgradient} that \eqref{eq:demipositive} is true.

We now show that there exist constants $C_1, C_2 \in (0, \infty)$ such that
\begin{IEEEeqnarray}{rCl}
    \frac{1}{n} \sum_{\ell = 1}^n \norm{x}^2 \leq C_1 \norm{g}^2 + C_2
\label{eq:bounded}
\end{IEEEeqnarray}
for all $g \in \R^B$ and $x \in \bar h_\ell(g)$.  
We have that each $\blue{\nabla Q_\ell}$ is $K_{\max}$-Lipschitz: 
$\norm{\blue{\nabla Q_\ell(g_1) - \nabla Q_\ell(g_2)}} \leq K_{\max} \norm{g_1 - g_2}$ for all $g_1, g_2 \in \Ccal_\epsilon$.  
Setting $g_2 = g^D$ from the theorem statement, we have
\begin{IEEEeqnarray*}{rCl}
    \norm{\blue{\nabla Q_\ell(g)}} &=&  \norm{\blue{\nabla Q_\ell(g) - \nabla Q_\ell(g^D)}}\\
                  &\leq& K_{\max} \norm{g - g^D}\\
                  &\leq& K_{\max}\norm{g} + K_{\max}\norm{g^D}
\end{IEEEeqnarray*}
for all $g \in \Ccal_\epsilon$. Squaring and applying Young's inequality then gives
\begin{IEEEeqnarray*}{rCl}
    \norm{\blue{\nabla Q_\ell(g)}}^2 &\leq& K_{\max}^2 (\norm{g} + \norm{g^D})^2\\
    &\leq& 2 K_{\max}^2 (\norm{g}^2 + \norm{g^D}^2)\\
    &=& C^\ell_1 \norm{g}^2 + C_2,
\end{IEEEeqnarray*}
where $C^\ell_1 := 2K_{\max}^2$ and $C_2 := 2 K_{\max}^2 \norm{g^D}^2$.  If $g \notin \Ccal_\epsilon$, then
$\bar h_\ell(g) = \emptyset$, so there is nothing to verify in this case. 
Summing over $\ell$ and dividing by $n$ then gives \eqref{eq:bounded} with $C_1 = (1/n)\sum_{\ell=1}^n C^\ell_1$.
It then follows from \cite[Thm. 4]{Ryu2022} that $g^t \to g^* \in \zer(\mathbf{h} + \Nc)$ with probability $1$, noting that, although we do not
satisfy the assumption that $\dom \blue{\nabla Q_\ell} = \R^B$ required by that theorem, the theorem
still holds as the iterates $g^t$ are well defined, which follows from the fact that
$\operatorname{ran} P_{\Ccal_{\epsilon}} = \Ccal_\epsilon = \dom \blue{\nabla Q_\ell}$ for all $\ell$. 

It follows from Fermat's rule that $\zer(\mathbf{h} + \Nc) = \arg \min (\blue{Q} + \Ic)$.  Following an identical argument to the first part of the proof of Lemma~\ref{lemma_equalitypotentials}, applied to $\blue{Q}$, it follows that $g^* \in \zer(\mathbf{h})$.  We now show that $\blue{\nabla Q_\ell(g^*)} = 0$ for all $\ell = 1,2, \ldots, n$.  Indeed, by the Baillon-Haddad Theorem \cite[p.~29]{Ryu2022}, we have
\begin{align*}
    (\blue{\nabla Q_\ell(g_1)} &- \blue{\nabla Q_\ell(g_2)})^\T(g_1 - g_2) \\ & \geq \frac{1}{K_{\max}}\norm{\blue{\nabla Q_\ell(g_1) - \nabla Q_\ell(g_2)}}^2
\end{align*}
for all $g_1, g_2 \in \Ccal_\epsilon$ and $\ell = 1,2, \ldots, n$.  Using the fact that $\blue{\nabla Q_\ell(g^D)} = 0$ for all $\ell = 1,2, \ldots, n$, we then have
\begin{align*}
    (\blue{\nabla Q_\ell(g^*)})^\T(g^* - g^D) &\geq \frac{1}{K_{\max}} \norm{\blue{\nabla Q_\ell(g^*)}}^2,\\
    \frac{1}{n}\sum_{l=1}^n (\blue{\nabla Q_\ell(g^*)})^\T(g^* - g^D) &\geq \frac{1}{nK_{\max}}\sum_{l=1}^n \norm{\blue{\nabla Q_\ell(g^*)}}^2.
\end{align*}
Combining with the fact that $\mathbf{h}(g^*) = 0$ gives
\begin{equation*}
    \frac{1}{nK_{\max}} \sum_{l=1}^n \norm{\blue{\nabla Q_\ell(g^*)}}^2 = 0
\end{equation*}
which implies $\blue{\nabla Q_\ell(g^*)} = 0$ for all $\ell = 1,2, \ldots, n$.  Finally, it follows from Lemma~\ref{lemma_equalitypotentials} that $\mathbf{p}_O(g^*) = \mathbf{p}_O^D$.
\end{proof}

\section{Simulation results}\label{sec:simulation}

In this section, we illustrate our convergence results through several simulated experiments.  We begin by introducing the crossbar array circuit structure, which is used in each experiment.

\subsection{Crossbar arrays}

A crossbar array is a network of resistive elements arranged in a grid, illustrated in Figure~\ref{fig:crossbar}.  The circuit graph of a crossbar array is a complete bipartite graph between a set of input nodes and a set of output nodes.  Crossbar arrays are popular devices for in-memory computation, due to their ability to perform one step matrix--vector multiplication \cite{Xia2019, Heidema2024}.

\begin{figure}
    \centering
    \includegraphics[width=0.5\linewidth]{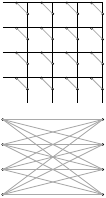}
    \caption{A crossbar array of resistive elements, represented as grey branches.  Above: the physical layout of resistive elements in a crossbar array, as a grid.  Below: the circuit graph of a crossbar array, a complete bipartite graph between horizontal nodes (open circles) and vertical nodes (closed circles).}
    \label{fig:crossbar}
\end{figure}

\subsection{Step size and convergence rate}

We begin by running Contrastive Learning on a crossbar array with 40 input nodes and 30 output nodes, and examining the effect of step size on the rate of convergence.  With the input potentials given by $p_I = [1, 2, \ldots, 40]^\T$, Theorem~\ref{thm_mainresult} guarantees convergence for step sizes less than $2/K = 8.9564\times 10^{-11}$.  Experimentally, we find that the algorithm converges for much larger step sizes, with the fastest convergence at a step size near $\gamma = 0.007$.  We find that, for all step sizes tested, the algorithm converges to a solution satisfying $\|{p_O - p_O^D}\| = 0$.  Results are illustrated in Figure~\ref{fig:step_size}.

\begin{figure}
\begin{tikzpicture}
    \begin{axis}[
        width=\linewidth,
        height=0.6\linewidth,
        xlabel={Iterations},
        ylabel={$\left\|p_O - p_O^D\right\|$},
        legend pos=outer north east,
        cycle list name=black white,
        xmin=0,ymin=0,xmax=20,ignore zero=y,
        legend pos = north east
    ]

    \pgfplotstableread[col sep=comma]{data.csv}\datatable

    \addlegendimage{empty legend}
    \addlegendentry{\hspace{-.6cm}$\gamma$}
    
    \addplot[dashed] table[y index=0, x expr=\coordindex] {\datatable};
    \addlegendentry{$0.001$};

    \addplot[dotted] table[y index=1, x expr=\coordindex] {\datatable};
    \addlegendentry{$0.004$};

    \addplot[solid] table[y index=2, x expr=\coordindex] {\datatable};
    \addlegendentry{$0.007$};

    \addplot[dashdotted] table[y index=3, x expr=\coordindex] {\datatable};
    \addlegendentry{$0.010$};

    \addplot[densely dotted] table[y index=4, x expr=\coordindex] {\datatable};
    \addlegendentry{$0.013$};

    \end{axis}
\end{tikzpicture}
\caption{Error in predicted output potentials as a function of iteration count, for varying step size $\gamma$.  The network is a complete bipartite graph with $40$ input nodes and $30$ output nodes.  The lower conductance bound $\epsilon = 0.1$.  Conductances are initialized to $2$ S and the training sample is generated using a network with uniformly sampled conductances between $0$ and $10$ S.  The input potentials are given by $p_I = [1, 2, \ldots, 40]^\T$.}
\label{fig:step_size}
\end{figure}
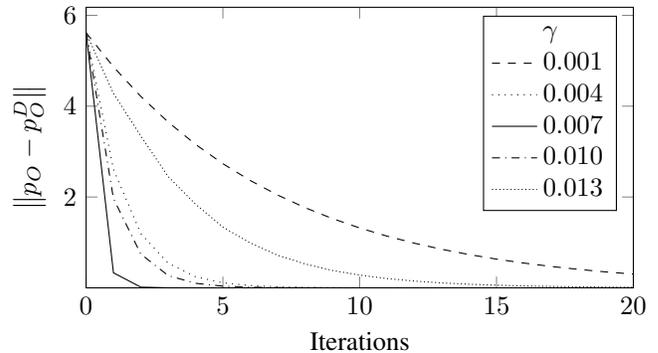

\subsection{Network size and convergence rate}

Our second experiment investigates the effect of network size on convergence rate.  Contrastive learning is applied to crossbar arrays with equal numbers of input and output nodes, and the step size $\gamma$ is fixed at $0.02$.  As the number of branches in the network increases, the convergence rate increases, as illustrated in the top of Figure~\ref{fig:network_size}.  Furthermore, as the number of branches increases, the theoretical maximum step size $2/K$ decreases, as shown in the bottom of Figure~\ref{fig:network_size}.
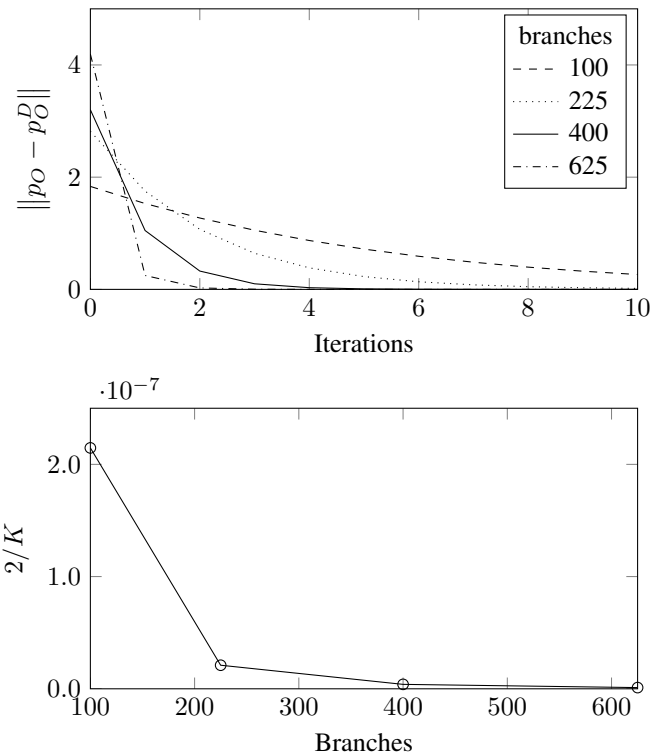
\begin{figure}
\begin{tikzpicture}

    \begin{axis}[
        width=\linewidth,
        height=0.6\linewidth,
        xlabel={Iterations},
        ylabel={$\left\|p_O - p_O^D\right\|$},
        xmin=0, xmax=10,
        ymin=0, ymax=5
    ]
        \pgfplotstableread[col sep=comma]{data5.csv}\datatable
     \addlegendimage{empty legend}
     \addlegendentry{\hspace{-.6cm}branches}

        \addplot[dashed] table[y index=0, x expr=\coordindex] {\datatable};
        \addlegendentry{100};

        \addplot[dotted] table[y index=1, x expr=\coordindex] {\datatable};
        \addlegendentry{225};

        \addplot[solid] table[y index=2, x expr=\coordindex] {\datatable};
        \addlegendentry{400};

        \addplot[dashdotted] table[y index=3, x expr=\coordindex] {\datatable};
        \addlegendentry{625};
    \end{axis}

    \begin{axis}[
        width=\linewidth,
        height=0.6\linewidth,
        at={(0, -0.6\linewidth)}, 
        xlabel={Branches},
        ylabel={$2/K$},
        xmin=100, xmax=625,
        ymin=0, ymax=2.5e-7,
        y tick label style={
                /pgf/number format/fixed,
                /pgf/number format/fixed zerofill,
                /pgf/number format/precision=1
            }
    ]
        \pgfplotstableread[col sep=comma]{data6.csv}\datatable
        \addplot[color=black, solid, mark=o] table[x index=0, y index=1] {\datatable};
    \end{axis}

\end{tikzpicture}
\caption{Above: error in predicted output potentials as a function of the number of branches, $n$, in a complete bipartite graph with equal numbers of input and output nodes (given by $\sqrt{B}$). The step size $\gamma = 0.02$. The lower conductance bound $\epsilon = 0.1$.  Conductances are initialized to $2$ S and the training sample is generated using a network with uniformly sampled conductances between $0$ and $10$ S.  The input potentials are given by $p_I = [1, 2, \ldots, \sqrt{B}]^\T$.  Below: maximum step size for which Theorem~\ref{thm_mainresult} guarantees convergence, as a function of $n$.}
\label{fig:network_size}
\end{figure}

\subsection{Stochastic proximal gradient descent}

Finally, we apply the stochastic variant of Contrastive Learning described in Section~\ref{sec:stochastic}, to learn a mapping described by 100 input/output data samples.  The circuit is a complete bipartite graph with 40 input nodes and 30 output nodes, and the sequence of step sizes is given by $\gamma_t = 10/(1+t)$. The output potentials $p_{Oj}$ converge in norm to the desired potentials $p_{Oj}^D$, as illustrated in Figure~\ref{fig:stochastic_example}.

\begin{figure}
\begin{tikzpicture}
    \begin{axis}[
        width=\linewidth,
        height=0.6\linewidth,
        at={(0, -0.6\linewidth)}, 
        xlabel={Iterations},
        ylabel={$\frac{1}{100}\sum_{j=1}^{100} \| p_{Oj} - p_{Oj}^D\|$},
        xmin=0, xmax=1000,
        y tick label style={
                /pgf/number format/fixed,
                /pgf/number format/fixed zerofill,
                /pgf/number format/precision=1
            }
    ]
        \pgfplotstableread[col sep=comma]{data_stochastic_stepsize.csv}\datatable
        \addplot[color=black, solid] table[x expr=\coordindex, y index=0] {\datatable};
    \end{axis}
\end{tikzpicture}
\caption{The mean error in predicted output potentials as a function of iteration count for stochastic Contrastive Learning.  The circuit is a complete bipartite graph with $40$ input nodes and $30$ output nodes. The sequence of step sizes is given by $\gamma_t = 10/(1+t)$. The lower conductance bound is $\epsilon = 0.1$.  Conductances are initialized to $2$ S and the training samples is generated using a network with uniformly sampled conductances between $0$ and $10$ S, and 100 sets of input potentials uniformly sampled between $-5$ V and $5$ V.}
\label{fig:stochastic_example}
\end{figure}
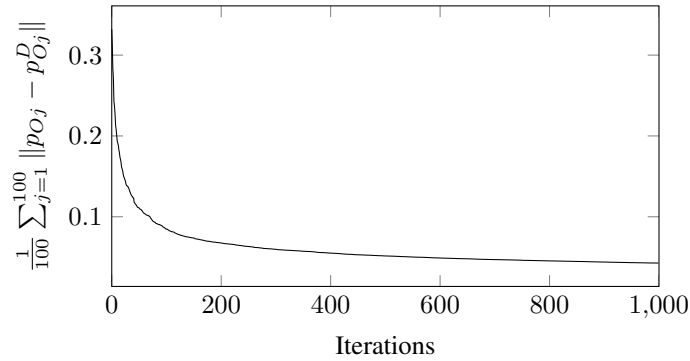

\section{Conclusions}

In this paper, we have proved convergence of Contrastive Learning when applied to a network of linear adjustable resistors.  Our proof relies on \blue{showing the convexity of the contrastive cost function, and the Lipschitzness of its gradient.}  The results extend in a natural way to stochastic variants of gradient descent.
\blue{There are several promising areas for future research.  Firstly, the linear network of this paper is restrictive, and an extension to nonlinear networks would allow the results to be applied to more realistic neuromorphic systems.  Secondly, we anticipate the results can be extended to other energy-based learning algorithms, such as Equilibrium Propagation.  This will involve the analysis of both different cost functions and optimization algorithms other than projected gradient descent, but subject to similar network constraints.  Finally, applying other optimization methods to Problem~1 may yield new energy-based learning algorithms with attractive properties.}

\section*{Acknowledgements}
The authors are grateful to Professor Andrea Liu and Professor Douglas Durian for several insightful discussions. \blue{We thank Benjamin Scellier for his feedback on our manuscript and for pointing to relevant literature}. Furthermore, M.A. Huijzer and B. Besselink would like to thank the CogniGron research center and the Ubbo Emmius Fund (University of Groningen) for financial support. T. Chaffey acknowledges financial support by Pembroke College, University of Cambridge. Moreover, H.J. van Waarde acknowledges financial support by the Dutch Research Council (NWO) under the Talent Programme Veni Agreement (VI. Veni. 22.335).
\bibliographystyle{ieeetr}
\bibliography{References_AH, References_Tom}
\vspace{-2mm}

\appendix

\section{Appendix}
\subsection{\blue{Proof of Lemma~\ref{lemma_propertiesJacobian}}} \label{proof_appendix_propertiesJacobian}
\begin{proof}
    First, we note that, by using the partitioning \eqref{eq_partition}, \eqref{eq_minimisingv} can be rewritten as
    \begin{align}\label{eq_vnew}
        v(g) = \bigl(I - \DO^\T (\DOGDO)^{-1}\DO G\bigr)\DI^\T\pI.
    \end{align}
    Then, for $k\in \{1,2, \ldots, B\}$, the $k$-th column of \blue{$\nabla^2 Q(g)$} is given by
    \begin{align*}
        \frac{\partial \blue{\nabla Q}}{\partial g_k} (g) = -2 v(g) \odot \frac{\partial v(g)}{\partial g_k}
    \end{align*}
    with the partial derivative computed as 
    \begin{align}\label{eq_partialderivative_step1}
    \begin{aligned}
        \frac{\partial v(g)}{\partial g_k} = - &\DO^\T \left[\frac{\partial}{\partial g_k} (\DOGDO)^{-1}\right]\DO G \DI^\T \pI \\
        & - \DO^\T (\DOGDO)^{-1} \left[\frac{\partial}{\partial g_k} \DO G \DI^\T\right]\pI.
    \end{aligned}
    \end{align}
    Obviously,
    \begin{align*}
        \frac{\partial}{\partial g_k} \DO G \DI^\T = \DO e_k e_k^\T \DI^\T 
    \end{align*}
    and
    \begin{align*}
        \frac{\partial}{\partial g_k} &(\DOGDO)^{-1} = \\
        & - (\DOGDO)^{-1} \DO e_k e_k^\T \DO^\T (\DOGDO)^{-1}.
    \end{align*}
    Substituting the above equations in \eqref{eq_partialderivative_step1} and consecutively reordering terms leads to 
     \begin{align*}
        \frac{\partial v(g)}{\partial g_k} = -W(g) e_ke_k^\T (I - W(g)G) \DI^\T \pI, 
    \end{align*}
    with $W$ defined as in \eqref{eq_W}. Using \eqref{eq_vnew}, the above can equivalently be written as 
    \begin{align*}
        \frac{\partial v(g)}{\partial g_k} = -W(g) e_ke_k^\T v(g),
    \end{align*}
    hence the columns of \blue{$\nabla^2 Q(g)$} equal 
    \begin{align*}
        \frac{\partial \blue{\nabla Q}}{\partial g_k} (g) = 2 v(g) \odot W(g) e_ke_k^\T v(g).
    \end{align*}
    Now, by collecting the columns of \blue{$\nabla^2 Q(g)$}, we obtain 
    \begin{align*}
        \blue{\nabla^2 Q(g)} = 2 v(g) \mathds{1}^\T \odot W(g) \diag \bigl( v(g)\bigr).
    \end{align*}
    Furthermore, we note that $v(g) = \diag \bigl(v(g)\bigr) \mathds{1}$ leading to 
    \begin{align*}
        \blue{\nabla^2 Q(g)} = 2 \diag\bigl(v(g)\bigr) \mathds{1}\mathds{1}^\T \odot W(g) \diag\bigl(v(g)\bigr).
    \end{align*}
    Then, using \cite[Fact 7.6.4]{Bernstein2008}, the above can be rewritten as 
    \begin{align*}
        \blue{\nabla^2 Q(g)} = 2 \diag\bigl(v(g)\bigr) \Bigl( \mathds{1}\mathds{1}^\T \odot W(g) \Bigr)\diag\bigl(v(g)\bigr)
    \end{align*}
    which, since $\mathds{1}\mathds{1}^\T$ is the identity under Hadamard multiplication, corresponds to the desired result \eqref{eq_finalJacobian}.

    \blue{To show that \blue{$Q$} is convex, we prove that 
    \blue{$\nabla^2 Q(g)$} is positive semidefinite}. Note that $W$ can equivalently be written as 
    \begin{align*}
        W(g) = \DO^\T (\DOGDO)^{-1}\DOGDO(\DOGDO)^{-1} \DO  
    \end{align*}
    and hence  
   \begin{align*}
       x^\T \blue{\nabla^2 Q(g)} x = 2\|G^{\frac{1}{2}}\DO^\T (\DOGDO)^{-1} \DO \diag\bigl(v(g)\bigr)x\| \geq 0
   \end{align*}
   for all $g\in(0, \infty)^B$ and $x\in \R^B$. Here, $G^{\frac{1}{2}}$ is a diagonal matrix having as diagonal entries the square roots of the diagonal entries of $G$. 
   \end{proof}

\subsection{\blue{Proof of Lemma~\ref{lemma_propertiesneededtoderivenorm}}}\label{proof_appendix_propertiesnorm}
\begin{proof}
        Using the partitioning \eqref{eq_partition}, the Laplacian matrix $L$ in \eqref{eq_networkdescrip} can be equivalently written as 
        \begin{align*}
            L = \begin{pmatrix}
                \DI G \DI^\T & \DI G\DO^\T \\ 
                \DO G \DI^\T & \DOGDO
            \end{pmatrix}.
        \end{align*}
        Then, since $\DO G \DI^\T$ is an off-diagonal block matrix in the Laplacian, its entries are nonpositive. Furthermore, as $\Gcal$ is assumed to be connected, $\DOGDO$ is a nonsingular $M$-matrix and it follows from \cite[Chapter~6, Fact~N38]{Berman1994} that $(\DOGDO)^{-1}$ only contains nonnegative entries. We conclude that $-(\DOGDO)^{-1}\DO G\DI^\T$ only contains nonnegative entries, i.e., a) holds. To show b), we note that 
         \begin{align*}
            \begin{pmatrix}
                \DI G\DI^\T & \DI G \DO^\T \\
                \DO G\DI^\T & \DO G \DO^\T 
            \end{pmatrix}
            \begin{pmatrix}
                \mathds{1} \\ \mathds{1}
            \end{pmatrix} = \begin{pmatrix}
                0 \\ 0 
            \end{pmatrix}.
        \end{align*}
        From the second line of this system of equations, it follows that $-(\DOGDO)^{-1}\DO G\DI^\T\mathds{1} = \mathds{1}$. Finally, since the entries of each row of $-(\DOGDO)^{-1}\DO G\DI^\T$ are nonnegative and sum to $1$, c) must hold. 
    \end{proof}

\begin{IEEEbiography}[{\includegraphics[width=1in, height=1.25in, clip, keepaspectratio]{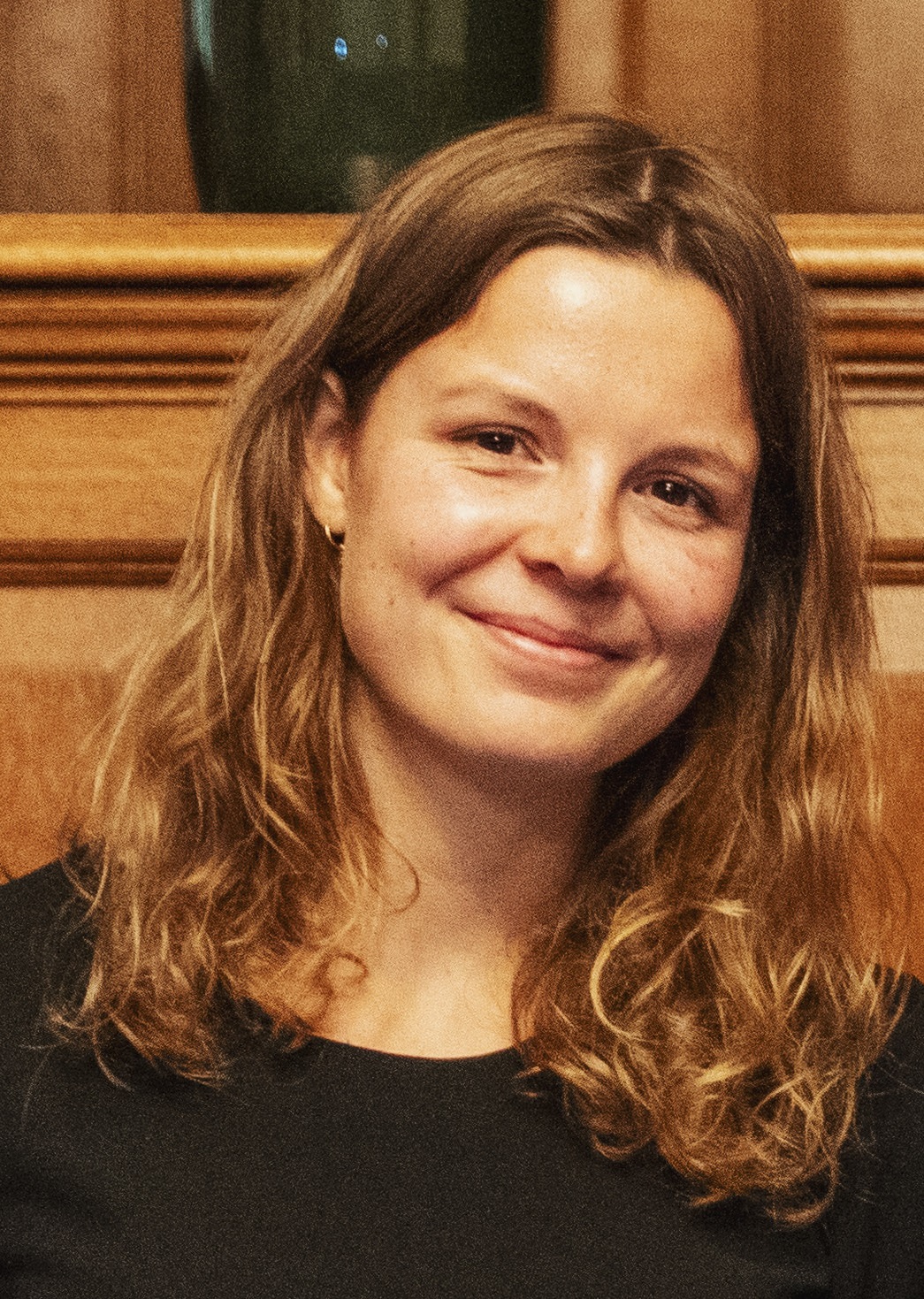}}]{Anne-Men Huijzer} received the B.Sc. and M.Sc. (cum laude) degrees in Applied Mathematics from the University of Groningen, Groningen, The Netherlands, in 2017 and 2019, respectively. She performed this research while she was a PhD student in the system, control and optimization group at the Bernoulli Institute for Mathematics, Computer Science and Artificial Intelligence. She finished her PhD in 2025. 
	
Her main research interest is modelling and analysis of nonlinear electrical circuits including memristors and neuromorphic computing. Her PhD research project was embedded in CogniGron - Groningen Cognitive Systems and Materials Center.
\end{IEEEbiography}

\begin{IEEEbiography}[{\includegraphics[width=1in, height=1.25in, clip, keepaspectratio]{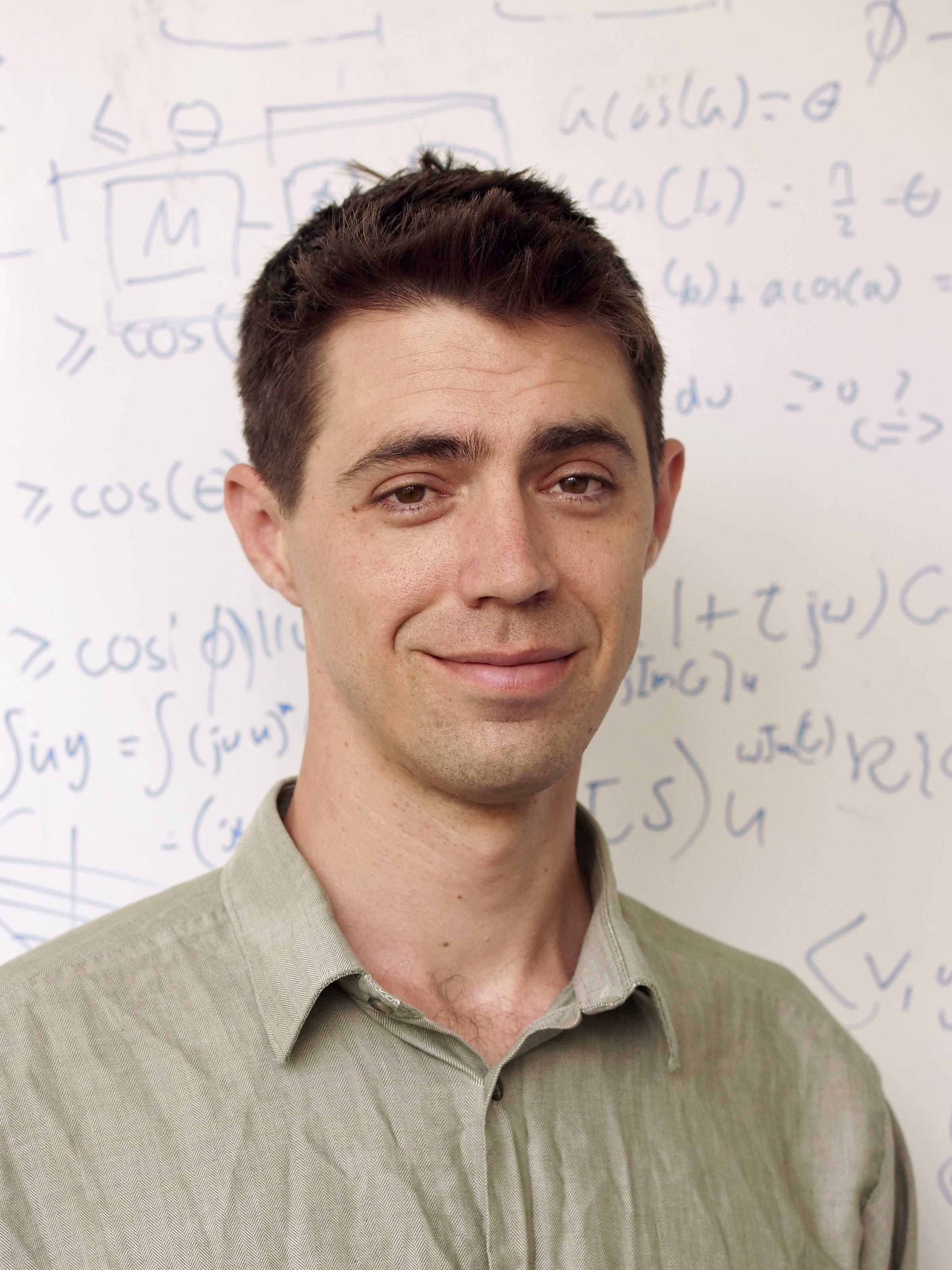}}]
    {Thomas Chaffey} is a lecturer in the School of Electrical and Computer Engineering at the University of Sydney, Australia. He received the B.Sc. (advmath) degree in mathematics and computer science and the M.P.E. degree in mechanical engineering from the University of Sydney in 2015 and 2018, respectively, and the Ph.D. degree from the University of Cambridge, U.K., in 2022.  From 2022 to 2025 he held the Maudslay-Butler Research Fellowship in Engineering at Pembroke College, University of Cambridge. His research interests are in nonlinear control and its intersection with optimization, circuit theory and learning. 
\end{IEEEbiography}

\begin{IEEEbiography}[{\includegraphics[width=1in, height=1.25in, clip, keepaspectratio]{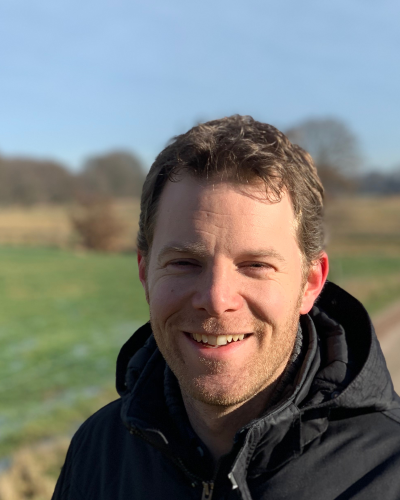}}]{Bart Besselink} received the M.Sc. (cum laude) degree in mechanical engineering in 2008 and the Ph.D. degree in 2012, both from Eindhoven University of Technology, Eindhoven, The Netherlands.

Since 2016, he has been with the Bernoulli Institute for Mathematics, Computer Science and Artificial Intelligence, University of Groningen, Groningen, The Netherlands, where he is currently an associate professor. He was a short-term Visiting Researcher with the Tokyo Institute of Technology, Tokyo, Japan, in 2012. Between 2012 and 2016, he was a Postdoctoral Researcher with the ACCESS Linnaeus Centre and Department of Automatic Control, KTH Royal Institute of Technology, Stockholm, Sweden.

His main research interests are in mathematical systems theory for large-scale interconnected systems, with emphasis on contract-based verification and control, model reduction, and applications in intelligent transportation systems and neuromorphic computing. He is a recipient (with Xiaodong Cheng and Jacquelien Scherpen) of the 2020 Automatica Paper Prize.
\end{IEEEbiography}

\begin{IEEEbiography}[{\includegraphics[width=1in,height=1.25in,clip,keepaspectratio]{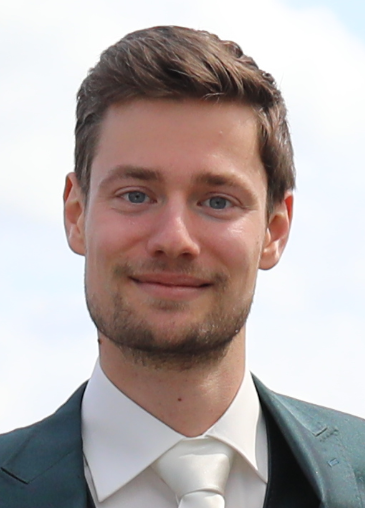}}]{Henk J. van Waarde} is an assistant professor in the Bernoulli Institute for Mathematics, Computer Science and Artificial Intelligence at the University of Groningen in The Netherlands. During 2020-2021 he was a postdoctoral researcher, first at the University of Cambridge, UK, and later at ETH Zürich, Switzerland. He obtained the Ph.D. degree \emph{cum laude} in Applied Mathematics from the University of Groningen in 2020. He was also a visiting researcher at University of Washington, Seattle in 2019-2020. His research interests include learning and data-driven control, system identification and identifiability, networks of dynamical systems, and robust and optimal control. Dr. van Waarde is the recipient of the 2025 SIAM Activity Group on Control and Systems Theory Prize. He serves as an Associate Editor of the IEEE Control Systems Letters.
\end{IEEEbiography}

\end{document}